\theoremstyle{definition}
\newtheorem{theorem}{Theorem}
\newtheorem{lemma}[theorem]{Lemma}
\newtheorem{question}[theorem]{Question}
\newtheorem{corollary}[theorem]{Corollary}
\newtheorem{example}[theorem]{Example}
\newtheorem*{acknowledgement}{Acknowledgement}
\newtheorem*{actdimconjecture}{Action Dimension Conjecture}
\newtheorem{definition}[theorem]{Definition}
\newtheorem*{T2}{Theorem~\ref{t:main}}
\newtheorem*{T3}{Theorem~\ref{thm:TC}}
\def\beqa{\begin{eqnarray}}
\def\eeqa{\end{eqnarray}}
\def\beqa{\begin{eqnarray}}
\def\eeqa{\end{eqnarray}}
\def\to{\mapsto}
\newcommand{\gs}{\sigma}
\newcommand{\gG}{\Gamma} 
\newcommand{\gD}{\Delta}
\newcommand{\Cn}{\mathscr C_n}
\DeclareMathOperator{\cd}{cd}
\DeclareMathOperator{\Conf}{Conf}
\DeclareMathOperator{\TC}{TC}
\def\Z{\mathbb{Z}}
\title[Products of free groups inside graph braid groups]
{Products of free groups inside graph braid groups}
\begin{document}
\author{Kasia Jankiewicz} \address{Department of Mathematical Sciences, University of California--Santa Cruz, Santa Cruz, CA 95064}
\email{kasia@ucsc.edu}

\author{Kevin Schreve} \address{Department of Mathematics, Louisiana State University, Baton Rouge, LA~70806}
\email{kschreve@lsu.edu}
\date{\today}
\keywords{Graph braid groups, right-angled Artin groups, topological complexity}
\subjclass[2020]{20F65, 20F36}
\maketitle
\begin{abstract}
Given a graph $\gG$ and a number $n$, the associated $n^{th}$ graph braid group $B_n(\gG)$ is the fundamental group of the unordered configuration space of $n$ points on $\gG$. \'{S}wi\k{a}tkowski showed that for a given $\gG$ and $n$ large enough, there is a free abelian subgroup of $B_n(\gG)$ of rank equal to the cohomological dimension of $B_n(\gG)$. In this note, we observe that at the cost of possibly adding additional points, we can find a subgroup of the same cohomological dimension which is a direct product of non-abelian free groups, and give some applications. 
\end{abstract}
\section{Introduction}
 Given a topological space $X$, the \emph{unordered configuration space $\Conf_n(X)$} is the space of all $n$-element subsets of $X$. Formally 
 $$\Conf_n(X) = (X^n -\{x\in X^n: x_i = x_j \text{ for some }i\neq j\})/S_n$$ 
 where $S_n$ is the symmetric group on $n$ elements, and its action on $X^n$ is by permuting the coordinates. Without taking the quotient by the action of $S_n$, we obtain the \emph{ordered configuration space $\widetilde\Conf_n(X)$}, i.e.\ the space of ordered $n$-tuples of disjoint points in $X$. Artin braid groups arise as the fundamental groups of the configuration space $\Conf_n(\mathbb R^2)$ on the plane.
 
 When $X = \gG$ is a finite simplicial graph, the fundamental group of $\Conf_n(\gG)$ is the \emph{graph braid group $B_n(\gG)$}. The \emph{pure graph braid group} $PB_n(\gG)$ is the fundamental group of $\widetilde \Conf_n(\gG)$, and it has index $n!$ in $B_n(\gG)$. Graph braid groups were first systematically studied in \cite{abrams, abramsghrist, ghrist, swiatkowski}.
 
The cellular structure of $\gG$ induces a natural structure of a cube complex on $\gG^n$.
 The \emph{unordered combinatorial $n$-point configuration space} $\Cn(\gG)$ is defined as $$\Cn(\gG) = (\gG^n - \Delta)/S_n$$ where here $\Delta$ denotes the \emph{thick} diagonal, i.e.\ the subcomplex of $\gG^n$ consisting of products of edges where at least two edges intersect, and the $S_n$-action freely permutes the coordinates, as before.

It is known that after sufficiently fine subdivision of $\gG$, $\Cn(\gG)$ is a deformation retract of $\Conf_n(\gG)$. We will always assume this has occurred. 
To be precise, if $\gG$ has at least $n$ vertices, Prue and Scrimshaw, verifying a conjecture of Abrams, showed that it suffices to have all distinct vertices of valency $\ne 2$ separated by at least $(n-1)$-edges and each homotopically non-trivial cycle at a vertex of degree $>2$ contain at least $(n+1)$-edges \cite{PrueScrimshaw}.

Note that the dimension of $\Cn(\gG)$ depends on the number of disjoint edges in $\gG$; in particular sufficiently subdividing edges of $\gG$ increases the dimension of $\Cn(\gG)$ up to $n$. 
On the other hand, \'{S}wi\k{a}tkowski showed that up to homotopy one can often consider a smaller cube complex. Let $m(\gG)$ denote the number of essential vertices of $\gG$, where a vertex is \emph{essential} if it has degree $> 2$. 

 \begin{theorem}\cite{swiatkowski}*{Theorem 0.1}
$\Conf_n(\gG)$ is homotopy equivalent to a cube complex of dimension $\min\{m(\gG), n\}$.  Furthermore, $B_n(\gG)$ contains a subgroup which is free abelian of rank $\min(m(\gG), \left \lfloor{n/2}\right \rfloor )$. 
\end{theorem}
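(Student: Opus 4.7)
The plan is to replace the Abrams model $\Cn(\gG)$ by a smaller deformation retract $S_n(\gG)$ in which every cube factors as a product of independent local moves indexed by essential vertices, and then to exhibit commuting independent loops localized near distinct essential vertices.

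For the cube complex, I would define $S_n(\gG)$ so that a $k$-cube records a discrete distribution of $n$ tokens on (a sufficient subdivision of) $\gG$ together with a choice of $k$ distinct essential vertices and, at each such vertex $v$, one ``moving'' token that slides along a single branch incident to $v$. Such a cube is manifestly a product of $k$ one-cubes indexed by the chosen essential vertices, so $\dim S_n(\gG) \leq m(\gG)$, and $\dim S_n(\gG) \leq n$ because a cube cannot involve more moves than available tokens. To see that $\Cn(\gG) \simeq S_n(\gG)$, I would orient each maximal arc of $\gG$ joining branch or terminal vertices toward its nearest essential vertex and set up a discrete Morse matching on $\Cn(\gG)$ that pairs each cube involving a slide along a non-terminal segment of such an arc with the cube obtained by collapsing that slide. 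The unmatched critical cubes are exactly those in which every sliding token is adjacent to an essential vertex, i.e.\ the cubes of $S_n(\gG)$.

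For the free abelian subgroup, after subdivision the essential vertices $v_1, \dots, v_{m(\gG)}$ admit pairwise disjoint neighborhoods $N_{v_i}$, each containing at least three branch germs. In each $N_{v_i}$ I would place two tokens and define $\gamma_{v_i} \in B_n(\gG)$ to be the local loop that exchanges them by a Y-maneuver (or, if $v_i$ lies on a short cycle, by sending one token once around that cycle). Since the supports of distinct $\gamma_{v_i}$ are disjoint, the loops commute. They are linearly independent in $H_1(S_n(\gG))$ because the chain description of $S_n(\gG)$ factors over essential vertices and $\gamma_{v_i}$ has non-zero coefficient only in its $v_i$-factor. Each loop occupies two of the $n$ tokens near a distinct essential vertex, yielding $\min(m(\gG), \lfloor n/2 \rfloor)$ commuting independent generators and hence a copy of $\Z^{\min(m(\gG),\lfloor n/2 \rfloor)}$ in $B_n(\gG)$.

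The main obstacle is verifying the homotopy equivalence $\Cn(\gG) \simeq S_n(\gG)$: one has to specify the matching with enough care that all critical cubes really lie in $S_n(\gG)$ and that the matching is acyclic, which is a finite but delicate combinatorial check involving the geometry of non-essential arcs. A secondary technical point is detecting linear independence of the $\gamma_{v_i}$; this requires an invariant localized at each essential vertex, which Świątkowski supplies via explicit cellular cochains dual to the essential-vertex factors of $S_n(\gG)$.
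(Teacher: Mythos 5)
First, a caveat: the paper does not prove this statement at all --- it is imported verbatim from \'{S}wi\k{a}tkowski --- so I am comparing your sketch against his argument rather than against anything in this paper. Your overall strategy is the right one and is essentially his: a smaller cube complex whose cube directions are indexed by elementary moves at distinct essential vertices (which makes the dimension bound $\min\{m(\gG),n\}$ immediate), together with commuting exchange loops supported in disjoint neighborhoods of essential vertices. But the two steps you flag as ``delicate checks'' carry essentially all of the content, and as written neither is yet a proof. The matching you describe is not well defined as a discrete Morse matching: a cube of $\Cn(\gG)$ may contain several collapsible slides (in particular two tokens on the same subdivided arc, one blocking the other), so a cube does not determine a unique partner, and a cube can simultaneously be the collapse of a larger cube and contain its own collapsible slide. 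Choosing a consistent priority order and verifying acyclicity is precisely the content of the Farley--Sabalka Morse function, and even then the resulting Morse complex is a CW complex with one cell per critical cell whose attaching maps must be identified with those of your $S_n(\gG)$; \'{S}wi\k{a}tkowski instead builds his complex directly and deformation retracts the configuration space onto it.

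Second, the independence of the $\gamma_{v_i}$ in $H_1$ is asserted rather than proved, and the most natural reading of your ``cochains dual to the essential-vertex factors'' fails: the signed count of elementary moves at $v_i$ --- equivalently the class pulled back from $H^1$ of the Crisp--Wiest RAAG --- evaluates to zero on the exchange loop, since the hexagon word $ac^{-1}ba^{-1}cb^{-1}$ dies in the abelianization. A detecting cocycle must record where the second token sits, not merely which edge is traversed, so this step needs a genuinely different mechanism. A cleaner route to part two, parallel to how this paper proves Theorem~\ref{t:main}, avoids homology entirely: fix the $n-2k$ irrelevant points and set $Y=\prod_{i=1}^{k}\mathcal{C}_2(N_i)$, where the $N_i$ are disjoint subdivided stars around $k=\min(m(\gG),\lfloor n/2\rfloor)$ essential vertices. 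Any pairwise-disjoint collection of edges of $\gG$ lying in the $N_i$, together with the fixed vertices, spans a cube of $Y$, so $Y$ is a locally convex subcomplex of the nonpositively curved cube complex $\Cn(\gG)$ and hence $\pi_1$-injective. Since each $B_2(N_i)$ is a nontrivial free group, $\pi_1(Y)=\prod_i B_2(N_i)$ contains $\Z^{k}$, which gives the free abelian subgroup without any cocycle computation.
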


In particular, given any fixed graph $\gG$, the cohomological dimension of $B_n(\gG)$ stabilizes at $m(\gG)$ for $n \ge 2m(\gG)$. 
In this note, we observe that at the cost of adding some points on $\gG$, $B_n(\gG)$ contains a $m(\gG)$-fold direct product of nonabelian free groups.

 \begin{theorem}\label{t:main}
Let $\gG$ be a graph with $m(\gG)$ vertices of degree $\ge 3$, and let $m_3(\gG)$ be the number of degree $3$ vertices. Then for $n \ge  2m(\gG)+m_3(\gG)$, the group $B_{n}(\gG)$ contains an $m(\gG)$-fold direct product of free groups $F_2^{m(\gG)}$. 
 \end{theorem}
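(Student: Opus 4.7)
The plan is to work locally at each essential vertex of $\gG$ and then assemble the local pieces into a direct product. Let $v_1, \ldots, v_{m(\gG)}$ be the essential vertices of $\gG$, with degrees $d_i$, ordered so that $d_i = 3$ for $i \le m_3(\gG)$ and $d_i \ge 4$ otherwise. After subdividing $\gG$ enough to satisfy the Prue--Scrimshaw condition for the given value of $n$, I would choose pairwise disjoint star-shaped subgraphs $N_1, \ldots, N_{m(\gG)}$, where $N_i$ is a neighborhood of $v_i$ containing a sufficiently long initial segment of each incident edge, so that $N_i$ is isomorphic to a subdivided star on $d_i$ leaves. In $N_i$ I allocate $k_i$ points, where $k_i = 2$ if $d_i \ge 4$ and $k_i = 3$ if $d_i = 3$; the total is $\sum_i k_i = 2m(\gG) + m_3(\gG) \le n$ by hypothesis, and any surplus $n - \sum k_i$ points are parked at fixed vertices of $\gG \setminus \bigsqcup_i N_i$, which can always be arranged by subdividing further edges away from the $N_i$.

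Next I would verify the local free-group factor at each $v_i$. For any star $N$ with $d \ge 3$ leaves, no two edges of $N$ are disjoint, so the combinatorial configuration space $\Cn(N)$ for $n = 2$ is one-dimensional and $B_2(N)$ is free; a direct vertex/edge count yields rank $(d-1)(d-2)/2$, which is at least $2$ as soon as $d \ge 4$. In the trivalent case $B_2(N) \cong \Z$ is too small, so we instead use $B_3$ on a subdivided tripod. By \'{S}wi\k{a}tkowski's theorem, the associated configuration space is again homotopy equivalent to a graph since $m(N) = 1$, so $B_3(N)$ is free, and a short Euler-characteristic computation gives rank $3$; hence $F_2 \hookrightarrow B_3(N)$. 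Either way, an $F_2$ sits inside the local braid group $B_{k_i}(N_i)$.

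The central technical step is showing that the homomorphism
\[
\prod_{i=1}^{m(\gG)} B_{k_i}(N_i) \longrightarrow B_n(\gG),
\]
induced by inserting the parked points into a configuration with exactly $k_i$ points in each $N_i$, is injective. Passing to Abrams' cube complex model, the source is naturally a subcomplex $\mathcal{C}' \subset \Cn(\gG)$. I would verify that $\mathcal{C}'$ is locally convex in the nonpositively curved cube complex $\Cn(\gG)$: since the $N_i$ and the parked vertices are pairwise disjoint in $\gG$, any two edges of $\Cn(\gG)$ coming from moves in distinct $N_i$'s are supported on disjoint edges of $\gG$ and hence automatically span a $2$-cube, so the link of each vertex of $\mathcal{C}'$ is a full subcomplex of the ambient link. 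Local convexity in an NPC cube complex lifts to convexity in the universal cover and yields $\pi_1$-injectivity.

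Combining the three ingredients produces the embedding $F_2^{m(\gG)} \hookrightarrow \prod_i B_{k_i}(N_i) \hookrightarrow B_n(\gG)$. The local Euler-characteristic computations are routine and the setup is straightforward once $\gG$ has been subdivided sufficiently; the real obstacle is the local convexity verification in the third paragraph, since this is precisely what makes the factors coming from different essential vertices genuinely commute and generate a direct product inside $B_n(\gG)$.
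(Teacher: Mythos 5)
Your proposal is correct, but it reaches the embedding by a genuinely different route than the paper. The paper never works with subcomplexes of $\Cn(\gG)$ directly: it uses the Crisp--Wiest local isometry $\Cn(\gG)\to S(A_\gD)$ to embed $B_n(\gG)$ into the RAAG $A_\gD$, observes that the edge-sets $E_v$ attached to distinct essential vertices are pairwise disjoint so that they generate a special subgroup $A_\times=\prod_v A(v)$ which is an honest direct product, and then notes that each local free group ($B_2(T_k)$ for $k\ge 4$, $B_3(T_3)$ for trivalent vertices) lands in its own factor $A(v)$; the product structure of the subgroup is inherited from $A_\times$. You instead stay inside the Abrams cube complex and show that $\mathcal C'\cong\prod_i\Cn[k_i](N_i)$ (in the obvious notation) is a locally convex subcomplex, hence $\pi_1$-injective. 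The two arguments are cousins --- the disjointness of the $E_v$ that makes $A_\times$ split as a product is exactly the disjointness that makes your links full --- but yours is more self-contained (no RAAG needed) and in fact embeds the larger group $\prod_i B_{k_i}(N_i)$, with free factors of rank $(d_i-1)(d_i-2)/2$ (your count here is the correct one; the rank-$(k-2)$ identification in the paper's Example~\ref{example:stars} undercounts, though both proofs only need rank $\ge 2$) and rank $3$ respectively. What the paper's route buys, and the reason the authors state for choosing it, is that the product subgroup comes packaged inside an ambient RAAG with retractions onto the factors $A(v)$; that extra structure is what drives the disjoint-conjugates argument for Theorem~\ref{thm:TC}, and your construction would not feed into that application without further work. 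Two small points to tighten: the fullness of $\mathrm{lk}(c,\mathcal C')$ in $\mathrm{lk}(c,\Cn(\gG))$ is the statement that any ambient cube spanned by edges of $\mathcal C'$ already lies in $\mathcal C'$ (true, since such a cube only involves cells of $\gG$ inside $\bigsqcup_i N_i$ and the parked vertices, preserving the count $k_i$ in each $N_i$); the observation you give --- that moves in distinct $N_i$'s automatically span squares --- is the converse direction, needed to see that $\mathcal C'$ really is the product complex, so you should record both. And your Euler-characteristic rank computations implicitly use connectivity of the local configuration spaces, which is true but worth a word.
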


 We think of this as a graph braid version of the resolution of the Tits conjecture in \cite{crispparis}, and its proposed generalization in \cite{jankiewiczschreve}. Though we did not find this explicit statement in the literature, it follows immediately from known results. 
 We give a proof using a construction of Crisp-Wiest which provides a right-angled Artin group $A$ and a local isometry from $\Cn(\gG)$ to the corresponding Salvetti complex \cite{cw}, as the fact that this subgroup sits nicely inside an ambient right-angled Artin group is crucial for our applications.  
 In particular, in Section \ref{s:applications}, we improve a recent theorem of Knudsen calculating the topological complexity of graph braid groups \cite{k23}*{Theorem 1.1}.

\begin{theorem}\label{thm:TC}
Let $\gG$ be a graph with $m(\gG)>0$ vertices of degree $\ge 3$, and let $m_3(\gG)$ be the number of degree $3$ vertices. Then for all $n \ge 2m(\gG) + m_3(\gG)$ the topological complexity of $B_n(\gG)$ is $2m(\gG)$.
\end{theorem}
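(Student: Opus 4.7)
I would establish matching upper and lower bounds of $2m(\gG)$ on $\TC(B_n(\gG))$. The upper bound is essentially automatic: the cube complex $\Cn(\gG)$ is a $K(B_n(\gG), 1)$ and, by the \'{S}wi\k{a}tkowski theorem quoted in the introduction, is homotopy equivalent to a cube complex of dimension $\min(m(\gG), n) = m(\gG)$, since our hypothesis $n \ge 2m(\gG) + m_3(\gG)$ forces $n \ge m(\gG)$. The classical estimate $\TC(X) \le 2 \dim X$ and homotopy invariance of $\TC$ then yield $\TC(B_n(\gG)) \le 2m(\gG)$.

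For the matching lower bound I would apply the zero-divisor cup-length bound for $\TC$ with $\Q$ coefficients. Theorem \ref{t:main} provides a subgroup $H \cong F_2^{m(\gG)}$ in $B_n(\gG)$. Moreover, the Crisp--Wiest local isometry from $\Cn(\gG)$ to a Salvetti complex gives a chain of inclusions $H \hookrightarrow B_n(\gG) \hookrightarrow A$, where $A$ is a right-angled Artin group and, following the Tits-conjecture-style construction, $H$ is generated by squares of $2m(\gG)$ standard generators of $A$ arranged in $m(\gG)$ non-commuting pairs. For each such generator $g$ of $A$ I would pull back the dual class $\alpha_g \in H^1(A; \Q)$ to a class $\beta_g \in H^1(B_n(\gG); \Q)$; it pairs with $g^2 \in H$ by $2$. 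The corresponding zero-divisors $\bar\beta_g = 1\otimes \beta_g - \beta_g\otimes 1 \in H^1(B_n(\gG)\times B_n(\gG); \Q)$ have a $2m(\gG)$-fold cup product whose restriction to $H \times H$ equals $2^{2m(\gG)} \prod_{i=1}^{m(\gG)}(b_i \otimes a_i - a_i \otimes b_i)$, via the identity $\bar a_i \bar b_i = b_i \otimes a_i - a_i \otimes b_i$ (which holds because $a_i b_i = 0$ in $H^*(F_2)$). A K\"unneth expansion shows this lies in top degree $2m(\gG)$ and is nonzero, so the product is already nonzero in $H^*(B_n(\gG)\times B_n(\gG); \Q)$. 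This gives $\TC(B_n(\gG)) \ge 2m(\gG)$.

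\textbf{Main obstacle.} The step requiring the most care is checking that the Crisp--Wiest embedding actually presents the generators of $H$ as squares of standard generators of $A$, so that the pullbacks of the dual basis of $H^1(A)$ restrict to $H$ with the pairings above. This is where the observation following Theorem \ref{t:main}, that $H$ sits compatibly inside an ambient RAAG, becomes essential; I would expect to adapt the cohomological bookkeeping from \cite{jankiewiczschreve} to perform the restriction step cleanly. Once that identification is in hand, the K\"unneth calculation and the upper bound are essentially formal.
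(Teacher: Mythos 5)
Your upper bound is fine and matches the paper's (which just invokes $\TC(G)\le 2\cd(G)$ together with \'{S}wi\k{a}tkowski's dimension bound). The lower bound, however, has a concrete gap: the subgroup $H\cong F_2^{m(\gG)}$ produced by Theorem~\ref{t:main} is \emph{not} generated by squares of standard generators of the Crisp--Wiest RAAG $A_\gD$. The standard generators of $A_\gD$ correspond to single edges of $\gG$, and sliding one point across one edge is not even a loop in $\Cn(\gG)$, so no standard generator (let alone its square) lies in the image of $B_n(\gG)$. What Theorem~\ref{t:main} actually gives are free groups $F(v)$ generated by long words such as $a_1^{-1}a_ia_2^{-1}a_1a_i^{-1}a_2$ (Example~\ref{example:stars}) or $a^{-1}d^{-1}fb^{-1}e^{-1}daf^{-1}eb$ (Example~\ref{example:tripod}). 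Every one of these words has exponent sum zero in every generator of $A_\gD$ --- this is forced, since each $F(v)$ lives in the braid group of a star or subdivided tripod, which is a tree, and the net flow of points across any edge of a tree along a based loop must vanish. Hence $H$ lies in the commutator subgroup of $A_\gD$, every class $\beta_g$ pulled back from $H^1(A_\gD;\Q)$ restricts to zero on $H$, and your $2m(\gG)$-fold product of zero-divisors restricts to zero on $H\times H$. The computation $\bar\beta_{a_i}\bar\beta_{b_i}|_{H\times H}=2^2(b_i\otimes a_i-a_i\otimes b_i)$ never gets off the ground, and the pairing ``$\alpha_g(g^2)=2$'' has no analogue here. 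Repairing this would require classes in $H^1(B_n(\gG);\Q)$ restricting to a basis of $H^1(H;\Q)$ that do \emph{not} come from $A_\gD$, and there is no evident reason such classes exist; indeed the difficulty of making cohomological lower bounds work for graph braid groups is precisely why a different criterion is used.

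The paper's route avoids cohomology entirely. It applies the Grant--Lupton--Oprea theorem (Theorem~\ref{dc}): if $H_0,H_1<G$ have disjoint conjugates, then $\TC(G)\ge\cd(H_0\times H_1)$. Using residual torsion-free nilpotence of RAAGs (Lemma~\ref{lem:nilpotent}), one picks inside each $F(v)$ two cyclic subgroups with disjoint conjugates in the factor $A(v)$, assembles them into two rank-$m(\gG)$ free abelian subgroups $H_0,H_1$ of $B_n(\gG)$, and propagates the disjoint-conjugates property through the product $A_\times=\prod A(v)$ and the retraction $A_\gD\to A_\times$ (Lemmas~\ref{l:obvious} and~\ref{lem:retraction}) to get $\TC(B_n(\gG))\ge\cd(H_0\times H_1)=2m(\gG)$. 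Note that $H_0\times H_1$ need not be a subgroup of $B_n(\gG)$, which is exactly the extra leverage this criterion provides over cup-length arguments. If you want a cohomology-flavored lower bound you would need to engineer it around this obstruction; as written, your restriction step fails.
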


In Section~\ref{sec:action dim}, we use Theorem \ref{t:main} to compute the action dimension of $B_n(\gG)$ for $n\geq 2m(\gG) + m_3(\gG)$, and partially compute its $L^2$-cohomology.

\begin{acknowledgement}
The first author was supported by the NSF grant DMS-2203307 and DMS-2238198. The second author was supported by the NSF grant DMS-2203325. We thank Ben Knudsen and Dan Cohen for reading an earlier draft and providing useful comments. 
\end{acknowledgement}

\section{The Crisp-Wiest procedure}

Recall that given a flag simplicial complex $L$, there is an associated \emph{right-angled Artin group} (RAAG) $A_L$, which has generators corresponding to the vertices of $L$, and where two generators commute if and only if the two corresponding vertices are connected by an edge. Of course, we only need the $1$-skeleton of $L$ to define $A_L$, but it will be convenient to have both descriptions. 
There is a locally CAT(0) cube complex with $\pi_1 = A_L$ called the \emph{Salvetti complex} $S(A_L)$, which is a cubical subcomplex of the torus $\prod_{v \in L^{(0)}} (S^1)^v$. Here, $S^1$ is cellulated with one vertex $e_0$, and for each simplex $\gs \in L$, $(S^1)^\gs$ is the subtorus consisting of points $(x_v)_{v \in V}$ so that $x_v = e_0$ for $v \notin \gs$. The Salvetti complex is the union of these standard subtori:  $$S(A_L) = \bigcup_{\gs \in L} (S^1)^\gs.$$

Crisp and Wiest show that for any graph $\gG$, $\Cn(\gG)$ admits a local isometry to the Salvetti complex of some RAAG \cite{cw}. 
We now explain this construction.
Define a graph $\gD$ whose vertices correspond to edges of $\gG$, and two vertices of $\gD$ are connected by an edge if and only if the edges are disjoint in $\gG$. 
Let $A_\gD$ be the associated RAAG, and $S(A_\gD)$ the associated Salvetti complex. 
We note that if $\gG$ is disconnected, then $\Delta$ is a join, hence $A_\gD$ splits in the obvious  way as a direct product. 

There is a natural map $\Cn(\gG) \rightarrow S(A_\gD)$ defined as follows. First choose an orientation for each edge of $\gG$ and $S(A_\Delta)$. 
Now, send all vertices of $\Cn(\gG)$ to the unique vertex in $S(A_\gD)$. 
Each edge $e$ of $\Cn(\gG)$ corresponds to a choice of edge in $\gG$ and $(n-1)$ disjoint vertices, send this oriented edge to the corresponding oriented edge in $S(A_\gD)$. 
It is easy to see that this extends to a combinatorial map $\psi: \Cn(\gG) \rightarrow S(A_\gD)$, as $k$-cubes in $\Cn(\gG)$ correspond to a choice of $k$ disjoint edges in $\gG$ and $n-k$ disjoint vertices, and hence there is a corresponding $k$-torus in $S(A_\gD)$. 
Crisp and Wiest show in \cite{cw}*{Theorem 2} that $\phi$ is a local isometry. 
Therefore, the induced map on $\pi_1$ induces an injection $B_n(\gG) \rightarrow A_\gD$.



 We now review two easy examples that are important to our argument. 
 
  \begin{example}[Star graphs]\label{example:stars}
 Let $T_k$ be the cone on $k \ge 4$ points, and label the edges $a_1, a_2 \dots a_k$. See Figure~\ref{fig: star}. In this case, the $2$-point configuration space $\mathcal{C}_2(T_k)$ is a graph with non-abelian free fundamental group, and is already sufficiently subdivided. The corresponding Crisp-Wiest RAAG, denoted by $A(T_k)$, is then just a free group with generators $a_1 \dots a_k$. 
 We can choose any two leaves of $T_k$ for a basepoint, say $a_1, a_2$. Then for each $i \neq 1,2$, then we can identify $B_2(T_k)$ using the Crisp-Wiest map with the following rank $k-2$ subgroup of $F_k$ 
 $$\langle a_1^{-1}a_ia_2^{-1}a_1a_i^{-1}a_2 : i=3,\dots, k \rangle.$$
 
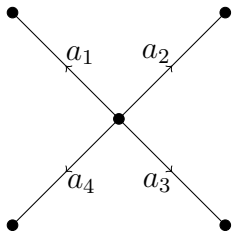
\begin{figure}
\begin{tikzpicture}
\begin{scope}[decoration={
    markings,
    mark=at position 0.5 with {\arrow{>}}}
    ]
\draw[postaction={decorate}]  (0,0) -- (45:2);
\draw[postaction={decorate}]  (0,0) -- (135:2);
\draw[postaction={decorate}]  (0,0) -- (225:2);
\draw[postaction={decorate}]  (0,0) -- (315:2);
\draw[fill=black] (0,0) circle (2pt) ;
\draw[fill=black] (45:2) circle (2pt) ;
\draw[fill=black] (135:2) circle (2pt) ;
\draw[fill=black] (225:2) circle (2pt) ;
\draw[fill=black] (315:2) circle (2pt) ;
\node[left] at (46:1.2) {$a_2$};
\node[right] at (135:1.2) {$a_1$};
\node[right] at (226:1.2) {$a_4$};
\node[left] at (315:1.2) {$a_3$};
\end{scope}
\end{tikzpicture}
\caption{The star $T_4$ on $4$ vertices. The corresponding Crisp-Wiest RAAG for $\mathcal{C}_2(T_4)$ is a free group of rank $4$ freely generated by $a_1, a_2, a_3, a_4$.}
\label{fig: star}
 \end{figure}
 \end{example}
 
  \begin{example}[Tripods]\label{example:tripod}
\begin{figure}
\begin{tikzpicture}

\begin{scope}[decoration={
    markings,
    mark=at position 0.5 with {\arrow{>}}}
    ]

\draw[postaction={decorate}, YellowGreen, thick] (0,0) -- (270:1);
\draw[postaction={decorate}, PineGreen, thick] (270:1) -- (270:2);
\draw[postaction={decorate}, CornflowerBlue, thick] (0,0) -- (30:1);
\draw[postaction={decorate}, RoyalBlue, thick] (30:1) -- (30:2);
\draw[postaction={decorate}, YellowOrange, thick] (0,0) -- (150:1);
\draw[postaction={decorate}, WildStrawberry, thick] (150:1) -- (150:2);

\draw[fill=black] (0,0) circle (1pt) ;
\draw[fill=black] (150:2) circle (1pt) ;
\draw[fill=black] (270:2) circle (1pt) ;
\draw[fill=black] (30:2) circle (1pt) ;
\draw[fill=black] (150:1) circle (1pt) ;
\draw[fill=black] (270:1) circle (1pt) ;
\draw[fill=black] (30:1) circle (1pt) ;
\node[WildStrawberry, above] at (150:1.4) {$a$};
\node[YellowOrange, above] at (150:0.4) {$d$};
\node[RoyalBlue, above] at (30:1.4) {$b$};
\node[CornflowerBlue, above] at (30:0.4) {$e$};
\node[PineGreen, right] at (270:1.5) {$c$};
\node[YellowGreen, right] at (270:0.5) {$f$};
\end{scope}

\begin{scope}[xshift = 6cm, yshift = -2.1cm]
\fill[black!20]  (0:2) -- (180:2) -- (90:3.46) -- (0:2);
\draw (0:2) -- (180:2) -- (90:3.46) -- (0:2);
\draw (60:2) -- (120:2) -- (0,0) -- (60:2);

\draw[WildStrawberry, fill=WildStrawberry] (0,0) circle (2pt) node[below=0.1] { $a$};
\draw[CornflowerBlue, fill=CornflowerBlue] (0:2) circle (2pt) node[below=0.1] { $e$};
\draw[YellowOrange, fill=YellowOrange] (90:3.46) circle (2pt) node[above=0.1] {$d$};
\draw[PineGreen, fill=PineGreen] (60:2) circle (2pt) node[right=0.1] { $c$};
\draw[RoyalBlue, fill=RoyalBlue] (120:2) circle (2pt) node[left=0.1] { $b$};
\draw[YellowGreen, fill=YellowGreen] (180:2) circle (2pt) node[left=0.1] {f};
\end{scope}
\end{tikzpicture}
\caption{The once-subdivided tripod $T_3$ and the corresponding defining flag complex for the Crisp-Wiest RAAG $A(T_3)$ for $\mathcal{C}_3(T_3)$. The use of the colors will be relevant in Example~\ref{ex: non conjugate pair for tripod}.}
\label{fig: tripod}
 \end{figure}
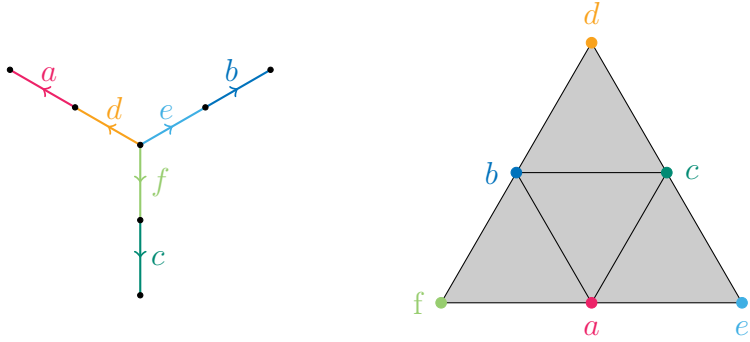
 Let $T_3$ be the cone on $3$ points, and label the edges $a,b$, and $c$. The $2$-point configuration space $\mathcal{C}_2(\gG)$ is a hexagon, and homotopy equivalent to the usual configuration space. If we orient all edges towards the origin, the Crisp-Wiest map induces homomorphism on fundamental groups from $\Z$ to $F_3 = \langle a,b,c \rangle$ which sends the generator of $\Z$ to the word $ac^{-1}ba^{-1}cb^{-1}$.

Now, suppose we have $n = 3$ points. 
We need to subdivide each edge into $2$ intervals, see Figure~\ref{fig: tripod}. The Crisp-Wiest RAAG becomes more complicated, see Figure~\ref{fig: tripod} for its nerve $\Delta$. We shall denote this RAAG by $A(T_3)$. A simple PL Morse theory argument \cite{bestvina} shows that $\mathcal{C}_3(\gG)$ is homotopy equivalent to a wedge of $3$ circles. By choosing the three leaves of $T_3$ as the basepoint of $\Cn(\gG)$, we can identify $B_3(T_3)$ with the subgroup 
$$\langle 
a^{-1}d^{-1}fb^{-1}e^{-1}daf^{-1}eb, 
b^{-1}e^{-1}dc^{-1}f^{-1}ebd^{-1}fc, 
c^{-1}f^{-1}ea^{-1}d^{-1}fce^{-1}da
\rangle\subseteq A_{\Delta}.$$

In general, for $n \ge 3$ the same Morse theoretic argument shows that $B_n(T_3)$ is a non-abelian free group \cite{ghrist, ConnollyDoig}.  
 \end{example}

We now prove Theorem~\ref{t:main}. 

\begin{T2}
Let $\gG$ be a graph with $m(\gG)$ vertices of degree $\ge 3$, and let $m_3(\gG)$ be the number of degree $3$ vertices. Then for $n \ge  2m(\gG)+m_3(\gG)$, the group $B_{n}(\gG)$ contains an $m(\gG)$-fold direct product of free groups $F_2^{m(\gG)}$.
\end{T2}
 
 \begin{proof}[Proof of Theorem \ref{t:main}] Let $n \ge 2m(\gG) + m_3(\gG)$, and take a sufficiently fine edge subdivision of $\gG$ so that $\Cn(\gG)$ is locally CAT(0) cubical with $\pi_1(\Cn(\gG)) = B_n(\gG)$. Let $A_\gD$ be the RAAG given to us by the Crisp-Wiest procedure, so we have an injection $B_n(\gG) \rightarrow A_{\gD}$ induced by the local isometry $\Cn(\gG)\to S(A_\gD)$.

As a basepoint in $\Cn(\gG)$ place $2$ points around each degree $\ge 4$ vertex and place $3$ points two steps away from each degree $3$ vertex, and the rest of the (irrelevant) points somewhere else. This is possible since $n\geq 2m(\gG)+m_3(\gG)$. 

For each essential vertex $v$ of degree $\geq 4$ let $E_v$ be the subset of edges containing it, and for each essential vertex $v$ of degree $3$, let $E_v$ be the subset of edges contained in the $2$-neighborhood of $v$ (i.e.\ $E_v$ forms a once-subdivided tripod).

Let $A(v)$ be the RAAG subgroup of $A_\Delta$ generated by $E_v$, i.e.\ $A(v)\simeq A(T_n)$ where $k$ is the degree of $v$, and where $A(T_k)$ is the Crisp-Wiest RAAG, as in Example~\ref{example:tripod} and Example~\ref{example:stars}. 
Let $A_{\times}$ denote the subgroup of $A_\Delta$ generated by $\bigcup E_v$ where the union is taken over all essential vertices, i.e.\ $A_\times = \prod A(v)$.

For each essential vertex $v$ of degree $3$, the graph braid group $B_n(\gG)$ contains a copy of rank $3$ free group $B_3(T_3)$, which is contained in $A(v)$, as described in Example~\ref{example:tripod}. Similarly, for each essential vertex $v$ of degree $k\geq 4$, $B_n(\gG)$ contains a copy of a rank $k-2$ free group $B_2(T_k)$, contained in $A(v)$, see Example~\ref{example:stars}. We denote such free group by $F(v)$. By passing to a rank $2$ free group of $F(v)$, we obtain $m(\gG)$ copies of rank $2$ free groups in $B_n(\gG)$. Since each such free group belongs to distinct direct factor of $A_\times$, we deduce that there is $m(\gG)$-fold direct product of rank $2$ free groups. 
\end{proof}

\section{Topological complexity}\label{s:applications} Let $P(X)$ be the path space of $X$, i.e.\ the space of all continuous functions $[0,1]\to X$, equipped with the compact-open topology. The path space fibration  $P(X)\rightarrow X\times X$ maps each path $\gamma:[0,1]\to X$ to its endpoints $(\gamma(0), \gamma(1))$. 

The \emph{topological complexity} $\TC(X)$ of a space $X$ is the smallest integer $n$ so that we can cover $X \times X$ by $(n + 1)$ open subsets $U_0, U_1 \dots U_n$ such that the path space fibration $P(X) \rightarrow X \times X$ admits a section over each $U_i$. The topological complexity $\TC(G)$ of a group $G$ is defined to be the topological complexity of its classifying space $BG$, which is well-defined as Farber showed $\TC(X)$ is homotopy invariant. 

Note that $\TC(X) = 0$ if and only if $X$ is contractible. 
It is known that $\TC(G)$ is bounded between $\cd(G)$ and $2\cd(G)$; in general $\TC(X)$ is bounded above by $2\dim X$ \cite{farber}. 
It is typically very difficult to calculate this invariant, even for groups, see however \cite{cv17, dranishnikov, l22} and the references within for some recent results.

Knudsen recently computed the topological complexity of $PB_k$ for $k \ge 2m(\gG)$ and $B_k$ for $k \ge 2m(\gG) + m_3(\gG)$. The computation for $PB_k(\gG)$ verified an old conjecture of Farber.

\begin{theorem}[\cite{k22}, \cite{k23}] Let $\gG$ be a connected graph with $m(\gG) \ge 2$.
\begin{enumerate}

 \item For $n \ge 2m(\gG)$, we have the equality
$$TC(PB_n(\gG)) = 2m(\gG).$$

\item  For $n \ge 2m(\gG) + m_3(\gG)$, we have the equality
$$TC(B_n(\gG)) = 2m(\gG)$$
provided that $\gG$ has no non-separating trivalent vertices. 
\end{enumerate}

\end{theorem}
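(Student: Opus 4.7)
The plan is to sandwich $\TC(B_n(\gG))$ between $2m(\gG)$ from above and below. The upper bound is routine: Farber's inequality $\TC(G)\le 2\cd(G)$ together with Świątkowski's computation $\cd(B_n(\gG)) = m(\gG)$ (for $n \ge 2m(\gG)$) immediately gives $\TC(B_n(\gG)) \le 2m(\gG)$.

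For the matching lower bound, Theorem~\ref{t:main} supplies a subgroup $H \cong F_2^{m(\gG)}$ inside $G := B_n(\gG)$. A standard zero-divisor cup-length computation gives $\TC(H) = 2m(\gG)$: writing $H = \prod_v F_2^{(v)}$ with generators $a_v, b_v$, the $2m(\gG)$ classes $\tilde a_v = a_v^* \otimes 1 - 1 \otimes a_v^*$ and $\tilde b_v = b_v^* \otimes 1 - 1 \otimes b_v^*$ in $H^1(H\times H;\mathbb{Z})$ all vanish on the diagonal and multiply to a nonzero element of $H^{2m(\gG)}(H\times H;\mathbb{Z})$. The strategy to transfer this witness to $G$ is to construct classes $\alpha_v, \beta_v \in H^1(G;\mathbb{Z})$ whose restrictions to $H$ are $a_v^*, b_v^*$. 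The zero-divisors $\tilde\alpha_v, \tilde\beta_v \in H^1(G\times G;\mathbb{Z})$ then automatically vanish on the diagonal, and since the restriction along $H\times H \hookrightarrow G\times G$ is a ring homomorphism sending their cup product to the nonzero $\prod_v \tilde a_v \tilde b_v$, the product on $G\times G$ is itself nonzero, giving $\TC(G)\ge 2m(\gG)$.

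The main obstacle is verifying that such lifts $\alpha_v, \beta_v$ exist, i.e., that the restriction $H^1(G;\mathbb{Z}) \to H^1(H;\mathbb{Z})$ is surjective. This is delicate because in the Crisp--Wiest embedding $G \hookrightarrow A_\gD$, each generator of $H$ lies in the commutator subgroup of $A_\gD$ and so is invisible to the homology of $A_\gD$. However, $H_1(G)$ is in general strictly larger than the image of $H_1(G) \to H_1(A_\gD)$, and the free subgroup $F(v)$ at each essential vertex $v$ was extracted from a genuine graph braid subgroup $B_2(T_k)$ or $B_3(T_3) \hookrightarrow G$ whose own abelianization is free of rank $\ge 2$; these classes should survive to yield the desired $2m(\gG)$ independent characters on $G$. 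If this direct lift fails in some configuration, one can instead invoke the cohomological $\TC$ lower bound with twisted coefficients using Shapiro's lemma for $L = \mathbb{Z}[(G\times G)/(H\times H)]$, which identifies $H^*(G\times G; L) \cong H^*(H\times H;\mathbb{Z})$, at the cost of more care with the cup-product computation since $L^{\otimes k}$ is not itself an induced module.
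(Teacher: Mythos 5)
Your upper bound is fine and agrees with the paper. The problem is the lower bound. Your entire argument funnels through the claim that the restriction $H^1(B_n(\gG);\Z)\to H^1(F_2^{m(\gG)};\Z)$ is surjective, i.e.\ that the $2m(\gG)$ coordinate characters of $H=\prod_v F(v)$ extend to characters of $G=B_n(\gG)$. You flag this as ``delicate'' but never prove it, and it is not expected to hold: surjectivity would require the image of $F(v)^{\mathrm{ab}}$ in $G^{\mathrm{ab}}$ to be a rank-$2$ direct summand for every essential vertex $v$, whereas $H_1(B_n(\gG))$ is typically small and the loops generating $F(v)$ can become homologous (or torsion, or dependent across different vertices) once $T_k$ sits inside the larger graph. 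Indeed, the inadequacy of the zero-divisor cup-length bound for graph braid groups is precisely the reason Knudsen's papers, and this one, do not use it. Your fallback via Shapiro's lemma is not a proof either, for the reason you yourself give: the cup products land in coefficients that are no longer induced, so the nonvanishing does not transfer. Finally, your proposal says nothing about part (1), the pure braid group statement for $n\ge 2m(\gG)$, which is outside the range $n\ge 2m(\gG)+m_3(\gG)$ where Theorem~\ref{t:main} applies.

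The paper's route avoids cohomology of $G$ altogether. It uses the Grant--Lupton--Oprea criterion (Theorem~\ref{dc}): if $H_0,H_1<G$ have disjoint conjugates, then $\TC(G)\ge\cd(H_0\times H_1)$ --- and this is informative exactly because $H_0\times H_1$ need not embed in $G$ and the bound need not be visible in $H^*(G\times G)$. Concretely, inside each free group $F(v)$ one finds two cyclic subgroups $C_0(v),C_1(v)$ whose conjugates are disjoint in the Crisp--Wiest factor $A(v)$; this uses only that RAAGs are residually torsion-free nilpotent (Lemma~\ref{lem:nilpotent}). Setting $H_i=\langle C_i(v)\rangle\cong\Z^{m(\gG)}$, the retraction of $A_\gD$ onto $A_\times=\prod_v A(v)$ and the product/subgroup closure properties of Lemma~\ref{l:obvious} show $H_0,H_1$ have disjoint conjugates in $B_n(\gG)$, whence $\TC(B_n(\gG))\ge\cd(\Z^{m(\gG)}\times\Z^{m(\gG)})=2m(\gG)$. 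If you want to repair your write-up, replace the lifting step with this conjugation-separation argument; the product structure $F_2^{m(\gG)}$ from Theorem~\ref{t:main} is still what makes the rank count come out to $2m(\gG)$, but the witness is a pair of free abelian subgroups, not a cup product.
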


We shall improve the second part of this theorem by removing the hypothesis of non-separating trivalent vertices, see \cite{k23a}*{Problem 1}.
Our proof strategy is based off of Knudsen's argument in \cite{k23}, but we make use of the Crisp-Wiest procedure to simplify and generalize the argument. 
The following notion is key to our argument (and also to the argument in \cite{k23}). 

\begin{definition}
Let $H_0$ and $H_1$ be subgroups of a group $G$.  We say that $H_0$ and $H_1$ \emph{have disjoint conjugates} if $gH_0g^{-1} \cap H_1 = 1$ for all $g \in G$. 
\end{definition}

\begin{lemma}\label{l:obvious} We record some easy properties.
\begin{enumerate}
\item\label{e:subgp} If $H_0, H_1 < G < G'$ and $H_0$ and $H_1$ have disjoint conjugates in $G'$, then they have disjoint conjugates in $G$. 
\item\label{e:retract} If $H_0$ and $H_1$ have disjoint conjugates in $G$, and $r: G' \rightarrow G$ is a retraction, then $H_0$ and $H_1$ have disjoint conjugates in $G'$. 
\item\label{e:product} If $H_0, H_1$ have disjoint conjugates in $G$ and $H_0', H_1'$ have disjoint conjugates in $G'$, then $H_0 \times H_0', H_1 \times H_1'$ have disjoint conjugates in $G \times G'$. 
\item\label{e:homomorphism} If $f: G \rightarrow K$ is a homomorphism with $H_0 \subset \ker f$ and $f|_{H_1}$ is injective, then $H_0$ and $H_1$ have disjoint conjugates. 
\end{enumerate}
\end{lemma}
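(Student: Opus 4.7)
The plan is to verify each item by a direct element chase; the lemma is essentially bookkeeping, and the only mildly interesting step is (\ref{e:retract}), where the retraction must genuinely be used.

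For (\ref{e:subgp}), I would just note that every $g \in G$ also lies in $G'$, so the hypothesis $gH_0g^{-1}\cap H_1 = 1$ for $g \in G'$ specializes immediately. For (\ref{e:retract}), I would take $x \in gH_0g^{-1}\cap H_1$ with $g \in G'$, write $x = g h_0 g^{-1}$ with $h_0 \in H_0$, and apply the retraction $r: G' \to G$. Since $r$ fixes $H_0, H_1 \subseteq G$ pointwise, this yields
\[
x = r(x) = r(g)\, h_0 \, r(g)^{-1} \in r(g) H_0 r(g)^{-1} \cap H_1,
\]
which equals $1$ by the hypothesis in $G$ (applied to $r(g) \in G$).

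For (\ref{e:product}), I would use that conjugation and intersection in $G \times G'$ are coordinate-wise, so
\[
(g,g')(H_0 \times H_0')(g,g')^{-1} \cap (H_1 \times H_1') = (gH_0g^{-1}\cap H_1) \times (g' H_0' g'^{-1}\cap H_1') = 1.
\]
For (\ref{e:homomorphism}), given $x = g h_0 g^{-1} \in H_1$, applying $f$ kills $h_0 \in \ker f$ and gives $f(x) = f(g)\cdot 1 \cdot f(g)^{-1} = 1$; injectivity of $f|_{H_1}$ then forces $x = 1$.

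No step presents a real obstacle, and I do not expect to need anything beyond the definition. The point of recording these four items together is presumably to transport disjoint-conjugate pairs along the chain $B_n(\gG) \hookrightarrow A_\times \hookrightarrow A_\Delta$ from the Crisp--Wiest procedure, across the direct product factorization $A_\times = \prod_v A(v)$ given by Theorem~\ref{t:main}, and through retractions of RAAGs onto standard subgroups in later applications to $\TC$.
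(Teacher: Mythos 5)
Your proof is correct and follows essentially the same element-chasing argument as the paper: part (\ref{e:subgp}) by specialization, part (\ref{e:retract}) by applying the retraction to a putative nontrivial intersection element, part (\ref{e:product}) by coordinate-wise conjugation, and part (\ref{e:homomorphism}) by pushing forward along $f$ and using injectivity on $H_1$. Nothing to add.
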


\begin{proof}
Part (\ref{e:subgp}) is obvious from the definition.
For (\ref{e:retract}), suppose we have $1 \ne h_0 \in H_0$ and $g \in G'$ with $gh_0g^{-1} \in H_1$. 
Then $$gh_0g^{-1}=r(gh_0g^{-1}) = r(g)h_0r(g)^{-1}.$$ 
Since $r(g)H_0r(g)^{-1}\cap H_1 = 1$, we have $gh_0g^{-1} = 1$, which is a contradiction. 

Part (\ref{e:product}) follows from $g(H_0 \times H_0')g^{-1} = gH_0g^{-1} \times gH_0'g^{-1}$. 
Finally, if $1 \ne h \in gH_0g^{-1}$ then $h \in \ker f$, so $h$ cannot be in $H_1$ as $f|_{H_1}$ is injective, hence (\ref{e:homomorphism}). 
\end{proof}

Our main tool for computing topological complexity is the following theorem of Grant-Lupton-Oprea, which was also one of the main tools of \cite{k22} and \cite{k23}. 

\begin{theorem}\label{dc}\cite{glo}*{Theorem 1.1}
If $H_0, H_1 < G$ have disjoint conjugates, then
$$TC(G) \ge \cd(H_0 \times H_1)$$

\end{theorem}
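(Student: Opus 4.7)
The plan is to establish the lower bound via a pullback and cohomology argument for sectional category. Since $\TC(G) = \mathrm{secat}(\pi)$ for the path-space fibration $\pi \colon (BG)^I \to BG \times BG$, and sectional category does not increase under pullback, it suffices to pull back to a well-chosen subspace and produce enough cohomological zero-divisors there.

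I would pull $\pi$ back along the inclusion $\iota \colon BH_0 \times BH_1 \hookrightarrow BG \times BG$, producing a fibration $\pi' = \iota^*\pi$. Modelling $\pi$ as the Borel fibration $E(G\times G) \times_{G\times G} G \to B(G\times G)$, where $G \times G$ acts on the fiber $G \simeq \Omega BG$ by the two-sided action $(g_0, g_1) \cdot g = g_0 g g_1^{-1}$, one sees that $\pi'$ is the Borel fibration of the restricted $(H_0 \times H_1)$-action on $G$. The stabilizer of $g \in G$ is $\{(h_0, h_1) \in H_0 \times H_1 : h_0 = g h_1 g^{-1}\}$, isomorphic to $H_0 \cap g H_1 g^{-1}$, which is trivial by the disjoint-conjugates hypothesis. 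So the action is free, and consequently $\mathbb{Z}[G]$ becomes a free $\mathbb{Z}[H_0 \times H_1]$-module.

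Next I would invoke Schwarz's classical cohomological lower bound on sectional category: $\mathrm{secat}(\pi') \ge k$ provided one can exhibit a nonzero $k$-fold cup product of zero-divisors in $H^*(BH_0 \times BH_1; -)$ with respect to arbitrary local coefficients. By Shapiro's lemma, classes in $H^*(H_0 \times H_1; M)$ correspond to classes in $H^*(G \times G; \mathbb{Z}[(G \times G)/(H_0 \times H_1)] \otimes M)$; and freeness of the action means that restricting this coefficient module along the diagonal $\Delta \colon G \hookrightarrow G \times G$ yields a free $\mathbb{Z}[G]$-module. Consequently the diagonal pullback $\Delta^*$ vanishes on all positive-degree classes of this form, exhibiting every such class as a zero-divisor for $\pi$.

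The main obstacle is realizing the top-degree nonzero class of $H^{\cd(H_0 \times H_1)}(H_0 \times H_1; M)$ as a $\cd(H_0 \times H_1)$-fold cup product of such zero-divisors, rather than just a single top-degree class (which would only yield $\mathrm{secat} \ge 1$). For the graph braid applications in later sections, $H_0$ and $H_1$ will both be products of free groups, so by K\"unneth the cohomology is generated in degree one and the needed factorization of the top class is immediate. For general $H_0, H_1$ the Grant--Lupton--Oprea argument handles this via a more delicate construction using fundamental classes of the factors, compatible with the multiplicativity of the Shapiro isomorphism and the cup product structure.
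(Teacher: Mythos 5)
First, a point of comparison: the paper does not prove this statement at all; it is quoted directly from Grant--Lupton--Oprea \cite{glo}, so your attempt must be measured against their published argument. The first half of your proposal coincides with theirs: you pull the path fibration $\pi$ back along $BH_0\times BH_1\to BG\times BG$, identify the pullback $\pi'$ with the Borel fibration of the two-sided action $(h_0,h_1)\cdot g=h_0gh_1^{-1}$ of $H_0\times H_1$ on $G$, and observe that the disjoint-conjugates hypothesis says exactly that this action is free. That is the key insight of \cite{glo}, and you have it. Where you diverge is in how the freeness is converted into the lower bound, and there your argument has two problems. The smaller one: your vanishing mechanism is ``the coefficient module is a free $\Z[G]$-module, hence positive-degree classes restrict to zero,'' but freeness of a module does \emph{not} force vanishing cohomology for infinite groups (e.g.\ $H^1(\Z;\Z[\Z])\cong\Z\neq 0$). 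Cohomological Shapiro requires the \emph{coinduced} module $\operatorname{Hom}_{\Z[H_0\times H_1]}(\Z[G\times G],M)$, not the induced module $\Z[(G\times G)/(H_0\times H_1)]\otimes M$ you wrote; the two agree only in finite index. This is repairable: the restriction of the coinduced module along the diagonal decomposes as a product, over double cosets, of modules coinduced from the stabilizers, which freeness makes trivial, and group cohomology commutes with products of coefficient modules, so all positive-degree classes of this form are indeed zero-divisors.

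The larger problem is the one you flag yourself, and it is fatal to the general statement: Schwarz's bound needs the top class in $H^{\cd(H_0\times H_1)}(H_0\times H_1;M)$ to factor as a cup product of $\cd(H_0\times H_1)$ positive-degree zero-divisors, and for arbitrary $H_0,H_1$ no such factorization need exist --- cup-length can be strictly smaller than cohomological dimension, which is precisely why zero-divisor methods cannot prove this theorem in full generality. Your closing appeal, that ``the Grant--Lupton--Oprea argument handles this via a more delicate construction using fundamental classes,'' is not accurate: \cite{glo} never runs a cup-product argument. Instead they use the freeness geometrically. The fiber of $\pi'$ is homotopy-discrete, so each component of its total space is, up to homotopy, the cover of $BH_0\times BH_1$ corresponding to a stabilizer subgroup; stabilizers are trivial, so every component is contractible. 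A fibration with componentwise contractible total space over a path-connected base has sectional category equal to the Lusternik--Schnirelmann category of the base, and the Eilenberg--Ganea theorem gives $\operatorname{cat}\bigl(B(H_0\times H_1)\bigr)=\cd(H_0\times H_1)$. Hence $\TC(G)=\operatorname{secat}(\pi)\ge\operatorname{secat}(\pi')=\cd(H_0\times H_1)$, with no cohomology at all; the category-theoretic step is exactly what bridges the gap between cup-length and $\cd$ that your proposal cannot close. (Your route, once the coinduction error is fixed, would suffice for the way Theorem~\ref{dc} is applied in this paper, since there $H_0$ and $H_1$ are free abelian and the top class visibly factors into degree-one classes; but as a proof of the quoted theorem it is incomplete.)
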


Note that this gives nontrivial information when $H_0 \times H_1$ is \emph{not} a subgroup of $G$. 

We now state some easy facts about RAAGs that we will use the proof of Theorem~\ref{thm:TC}. 

\begin{lemma}\label{lem:nilpotent}
Let $A_L$ be a RAAG, and $x,y \in A_L$ which generate a rank $2$ free subgroup. Then there is an element $w \in \langle x,y \rangle$ so that the cyclic subgroups $\langle x \rangle$ and $\langle w \rangle$ have disjoint conjugates in $A_L$. 
\end{lemma}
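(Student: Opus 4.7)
The plan is to use the lower central series of $A_L$ combined with the theorem of Duchamp--Krob that every right-angled Artin group is residually torsion-free nilpotent. Writing $\gamma_0(A_L) = A_L$ and $\gamma_{k+1}(A_L) = [A_L, \gamma_k(A_L)]$, this means $\bigcap_{k} \gamma_k(A_L) = \{1\}$ and each successive quotient $\gamma_{k-1}(A_L)/\gamma_k(A_L)$ is torsion-free abelian.

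First, since $x \neq 1$, I would locate the smallest $n \geq 1$ with $x \notin \gamma_n(A_L)$, so that the image $\bar{x}$ of $x$ in the torsion-free abelian quotient $\gamma_{n-1}/\gamma_n$ is nonzero. Then I would take $w$ equal to the $n$-fold iterated commutator $[x,[x,[\dots,[x,y]\dots]]]$ inside $F_2 = \langle x, y\rangle$, which lies in $\gamma_n(F_2) \subseteq \gamma_n(A_L)$ and is nontrivial in $F_2$ (the corresponding Lie word $\mathrm{ad}_x^n(y)$ is a nonzero basis element of the free Lie algebra on two generators).

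The verification is then short: suppose for contradiction that $x^a = g w^b g^{-1}$ for some $g \in A_L$ and $a, b \neq 0$. Normality of $\gamma_n(A_L)$ combined with $w \in \gamma_n$ gives $g w^b g^{-1} \in \gamma_n$, forcing $x^a \in \gamma_n$. In the torsion-free abelian quotient $\gamma_{n-1}/\gamma_n$ this reads $a \bar{x} = 0$, which is impossible since $\bar{x} \neq 0$ and $a \neq 0$.

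The main obstacle is really just ensuring torsion-freeness of every layer of the lower central series: residual nilpotence alone would only tell us that $a\bar{x}$ is torsion, which is not enough. Duchamp--Krob provides the stronger statement we need. If one wants to minimize external input, the argument splits by cases on the abelianization $v_x \in \Z^{L^{(0)}}$: when $v_x \neq 0$ one can already take $w = [x,y]$ and conclude using abelianization alone, so only the case $v_x = 0$ genuinely requires descending to deeper central quotients.
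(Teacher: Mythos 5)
Your argument is correct and follows essentially the same route as the paper: both rest on the Duchamp--Krob residual torsion-free nilpotence of RAAGs to produce a torsion-free nilpotent quotient in which $x$ survives (hence injects, being of infinite order there) while some nontrivial element $w$ of $\langle x,y\rangle$ dies, which is exactly the situation of Lemma~\ref{l:obvious}(\ref{e:homomorphism}). The only difference is that you instantiate the quotient and the element explicitly as $A_L/\gamma_n(A_L)$ and the iterated commutator $[x,[x,\dots,[x,y]\dots]]$ (using the torsion-freeness of the lower central series quotients, which is part of the same Duchamp--Krob theorem), and then re-derive the disjoint-conjugates conclusion by hand rather than quoting the lemma.
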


\begin{proof}
Since RAAG's are residually torsion-free nilpotent, there is a torsion-free nilpotent $N$ and a map $f: A_L \rightarrow N$ so that $f(x) \ne 1$. Since $N$ is torsion-free, $f$ is injective on $\langle x \rangle$. Since $\langle x,y \rangle \cong F_2$, the restriction $f_{|\langle x,y\rangle}$ is not injective, so there is an element $w \in \ker f$. Hence, $\langle x \rangle$ and $\langle w \rangle$ have disjoint conjugates by Lemma \ref{l:obvious}(\ref{e:homomorphism}).
\end{proof}

By applying the above lemma to the Crisp-Wiest RAAG $A(T_k)$ of the star graph $T_k$, with the free groups $B_n(T_k)$ where $n = 2$ if $k\geq 4$, and $n=3$ if $k=3$, we deduce that we can find a pair of elements in $B_n(T_k)$ whose cyclic subgroups have disjoint conjugates in $A(T_k)$. In the following two examples we provide explicit examples of such pairs. We will not use the specific elements constructed in those examples, so we do not provide all the details, but we outline how the statements can be verified.

\begin{example} Let $k\geq 4$, and $n=2$. By Example~\ref{example:stars}, the Crisp-Wiest RAAG $A(T_k) \simeq F(a_1,\dots, a_k)$, and $B_2(T_k) \simeq \langle a_1^{-1}a_ia_2^{-1}a_1a_i^{-1}a_2 : i=3,\dots, k \rangle$. Two elements in a free group are conjugate if they have equal cyclic permutations. We note that any two generators of $B_2(T_k)$ are not conjugate in $A(T_k)$, since they do not have equal cyclic permutation. Thus any two of the above generators of $B_2(T_k)$ have disjoint permutations.
\end{example}

\begin{example}\label{ex: non conjugate pair for tripod}
Let now $k=n=3$. By Example~\ref{example:tripod}, the Crisp-Wiest RAAG $A(T_3)$ has the defining graph as illustrated in Figure~\ref{fig: tripod}, and the subgroup $B_3(T_3)$ is a rank $3$ free groups freely generated by 
\begin{align*}
g_1 &:= a^{-1}d^{-1}fb^{-1}e^{-1}daf^{-1}eb
\\
g_2 &:= b^{-1}e^{-1}dc^{-1}f^{-1}ebd^{-1}fc 
\\
g_3 &:= c^{-1}f^{-1}ea^{-1}d^{-1}fce^{-1}da
\end{align*}
Each of $g_1, g_2, g_3$ is conjugate to $e^{-1}fd^{-1}ef^{-1}d$ in $A(T_3)$. See Figure~\ref{fig: cubical completion of a graph} for a geometric explanation.
\begin{figure}
\begin{tikzpicture}
\begin{scope}[decoration={
    markings,
    mark=at position 0.5 with {\arrow{>}}}
    ]
\node[draw, circle, fill = black, inner sep=0pt] (110) at (1,1,0) {};
\node[draw, circle, fill = black, inner sep=0pt] (101) at (1,0,1) {};
\node[draw, circle, fill = black, inner sep=0pt] (011) at (0,1,1) {};
\node[draw, circle, fill = black, inner sep=1pt] (111) at (1,1,1) {};
\node[draw, circle, fill = black, inner sep=0pt] (-111) at (-1,1,1) {};
\node[draw, circle, fill = black, inner sep=0pt] (1-11) at (1,-1,1) {};
\node[draw, circle, fill = black, inner sep=0pt] (11-1) at (1,1,-1) {};

\node[draw, circle, fill = black, inner sep=0pt] (-120) at (-1,2,0) {};
\node[draw, circle, fill = black, inner sep=0pt] (-121) at (-1,2,1) {};
\node[draw, circle, fill = black, inner sep=0pt] (02-1) at (0,2,-1) {};
\node[draw, circle, fill = black, inner sep=0pt] (12-1) at (1,2,-1) {};
\node[draw, circle, fill = black, inner sep=0pt] (-12-1) at (-1,2,-1) {};

\node[draw, circle, fill = black, inner sep=0pt] (20-1) at (2,0,-1) {};
\node[draw, circle, fill = black, inner sep=0pt] (21-1) at (2,1,-1) {};
\node[draw, circle, fill = black, inner sep=0pt] (2-10) at (2,-1,0) {};
\node[draw, circle, fill = black, inner sep=0pt] (2-11) at (2,-1,1) {};
\node[draw, circle, fill = black, inner sep=0pt] (2-1-1) at (2,-1,-1) {};

\node[draw, circle, fill = black, inner sep=0pt] (-102) at (-1,0,2) {};
\node[draw, circle, fill = black, inner sep=0pt] (-112) at (-1,1,2) {};
\node[draw, circle, fill = black, inner sep=0pt] (0-12) at (0,-1,2) {};
\node[draw, circle, fill = black, inner sep=0pt] (1-12) at (1,-1,2) {};
\node[draw, circle, fill = black, inner sep=0pt] (-1-12) at (-1,-1,2) {};

\draw[thick, postaction={decorate}, RoyalBlue] (011) -- (111);
\draw[thick, postaction={decorate}, WildStrawberry] (101) -- (111);
\draw[thick, postaction={decorate}, PineGreen] (110) -- (111);

\draw[thick, postaction={decorate}, CornflowerBlue] (-111) -- (011);

\draw[thick, postaction={decorate}, YellowOrange] (1-11) -- (101);

\draw[thick, postaction={decorate}, YellowGreen] (11-1) -- (110);

\draw[thick, postaction={decorate}, YellowOrange] (-111) -- (-121);
\draw[thick, postaction={decorate}, PineGreen] (-120) -- (-121);

\draw[thick, postaction={decorate}, YellowOrange] (11-1) -- (12-1);
\draw[thick, postaction={decorate}, RoyalBlue] (02-1) -- (12-1);

\draw[thick, postaction={decorate}, CornflowerBlue] (-12-1) -- (02-1);
\draw[thick, postaction={decorate}, YellowGreen] (-12-1) -- (-120);

\draw[thick, postaction={decorate}, CornflowerBlue] (11-1) -- (21-1);
\draw[thick, postaction={decorate}, WildStrawberry] (20-1) -- (21-1);

\draw[thick, postaction={decorate}, CornflowerBlue] (1-11) -- (2-11);
\draw[thick, postaction={decorate}, PineGreen] (2-10) -- (2-11);

\draw[thick, postaction={decorate}, YellowOrange] (2-1-1) -- (20-1);
\draw[thick, postaction={decorate}, YellowGreen] (2-1-1) -- (2-10);

\draw[thick, postaction={decorate}, YellowGreen] (-111) -- (-112);
\draw[thick, postaction={decorate}, WildStrawberry] (-102) -- (-112);

\draw[thick, postaction={decorate}, YellowGreen] (1-11) -- (1-12);
\draw[thick, postaction={decorate}, RoyalBlue] (0-12) -- (1-12);

\draw[thick, postaction={decorate}, CornflowerBlue] (-1-12) -- (0-12);
\draw[thick, postaction={decorate}, YellowOrange] (-1-12) -- (-102);

\draw[->] (0.4, 0.5) to (0.4, -1.2);
\node[left] at (0.4, -0.35) {\scriptsize $g_1$};
\draw[->] (0.5, 0.8) to (-1.2, 0.8);
\node[above] at (-0.35, 0.8) {\scriptsize $g_2$};
\draw[->] (0.9, 0.65) to (1.4, 1.15);
\node[right] at (1.1, 0.8) {\scriptsize $g_3$};

\end{scope}

\begin{scope}[shift = {(5,0)}, decoration={
    markings,
    mark=at position 0.5 with {\arrow{>}}}
    ]
    
\fill[gray!40]  (-1,1,0) -- (-1,1,1) -- (-1,0,1) -- (0,0,1) -- (0,0,0) -- (0,1,0);
\fill[gray!40]  (1,-1,0) -- (1,-1,1) -- (0,-1,1) -- (0,0,1) -- (0,0,0) -- (1,0,0);
\fill[gray!40]  (1,0,-1) -- (1,1,-1) -- (0,1,-1) -- (0,1,0) -- (0,0,0) -- (1,0,0);

\fill[gray!20] (-1,1,0) -- (-1,2,0) -- (-1,2,1) -- (-1,1,1);
\fill[gray!20] (-1,1,1) -- (-1,1,2) -- (-1,0,2,) -- (-1,0,1);

\fill[gray!20] (0,1,-1) -- (0,2,-1) -- (1,2,-1) -- (1,1,-1);
\fill[gray!20] (1,1,-1) -- (2,1,-1) -- (2,0,-1) -- (1,0,-1);

\fill[gray!20] (0,-1,1) -- (0,-1,2) -- (1,-1,2) -- (1,-1,1);
\fill[gray!20] (1,-1,1) -- (2,-1,1) -- (2,-1,0) -- (1,-1,0);

\node[draw, circle, fill = black, inner sep=0pt] (000) at (0,0,0) {};
\node[draw, circle, fill = black, inner sep=0pt] (100) at (1,0,0) {};
\node[draw, circle, fill = black, inner sep=0pt] (010) at (0,1,0) {};
\node[draw, circle, fill = black, inner sep=0pt] (110) at (1,1,0) {};
\node[draw, circle, fill = black, inner sep=0pt] (001) at (0,0,1) {};
\node[draw, circle, fill = black, inner sep=0pt] (101) at (1,0,1) {};
\node[draw, circle, fill = black, inner sep=0pt] (011) at (0,1,1) {};
\node[draw, circle, fill = black, inner sep=1pt] (111) at (1,1,1) {};

\node[draw, circle, fill = black, inner sep=0pt] (-100) at (-1,0,0) {};
\node[draw, circle, fill = black, inner sep=0pt] (-110) at (-1,1,0) {};
\node[draw, circle, fill = black, inner sep=0pt] (-101) at (-1,0,1) {};
\node[draw, circle, fill = black, inner sep=0pt] (-111) at (-1,1,1) {};

\node[draw, circle, fill = black, inner sep=0pt] (0-10) at (0,-1,0) {};
\node[draw, circle, fill = black, inner sep=0pt] (1-10) at (1,-1,0) {};
\node[draw, circle, fill = black, inner sep=0pt] (0-11) at (0,-1,1) {};
\node[draw, circle, fill = black, inner sep=0pt] (1-11) at (1,-1,1) {};

\node[draw, circle, fill = black, inner sep=0pt] (00-1) at (0,0,-1) {};
\node[draw, circle, fill = black, inner sep=0pt] (10-1) at (1,0,-1) {};
\node[draw, circle, fill = black, inner sep=0pt] (01-1) at (0,1,-1) {};
\node[draw, circle, fill = black, inner sep=0pt] (11-1) at (1,1,-1) {};

\node[draw, circle, fill = black, inner sep=0pt] (-120) at (-1,2,0) {};
\node[draw, circle, fill = black, inner sep=0pt] (-121) at (-1,2,1) {};
\node[draw, circle, fill = black, inner sep=0pt] (02-1) at (0,2,-1) {};
\node[draw, circle, fill = black, inner sep=0pt] (12-1) at (1,2,-1) {};
\node[draw, circle, fill = black, inner sep=0pt] (-12-1) at (-1,2,-1) {};

\node[draw, circle, fill = black, inner sep=0pt] (20-1) at (2,0,-1) {};
\node[draw, circle, fill = black, inner sep=0pt] (21-1) at (2,1,-1) {};
\node[draw, circle, fill = black, inner sep=0pt] (2-10) at (2,-1,0) {};
\node[draw, circle, fill = black, inner sep=0pt] (2-11) at (2,-1,1) {};
\node[draw, circle, fill = black, inner sep=0pt] (2-1-1) at (2,-1,-1) {};

\node[draw, circle, fill = black, inner sep=0pt] (-102) at (-1,0,2) {};
\node[draw, circle, fill = black, inner sep=0pt] (-112) at (-1,1,2) {};
\node[draw, circle, fill = black, inner sep=0pt] (0-12) at (0,-1,2) {};
\node[draw, circle, fill = black, inner sep=0pt] (1-12) at (1,-1,2) {};
\node[draw, circle, fill = black, inner sep=0pt] (-1-12) at (-1,-1,2) {};

\draw[thick, postaction={decorate}, RoyalBlue] (000) -- (100);
\draw[thick, postaction={decorate}, RoyalBlue] (010) -- (110);
\draw[thick, postaction={decorate}, RoyalBlue] (001) -- (101);
\draw[thick, postaction={decorate}, RoyalBlue] (011) -- (111);
\draw[thick, postaction={decorate}, WildStrawberry] (000) -- (010);
\draw[thick, postaction={decorate}, WildStrawberry] (100) -- (110);
\draw[thick, postaction={decorate}, WildStrawberry] (001) -- (011);
\draw[thick, postaction={decorate}, WildStrawberry] (101) -- (111);
\draw[thick, postaction={decorate}, PineGreen] (000) -- (001);
\draw[thick, postaction={decorate}, PineGreen] (100) -- (101);
\draw[thick, postaction={decorate}, PineGreen] (010) -- (011);
\draw[thick, postaction={decorate}, PineGreen] (110) -- (111);

\draw[thick, postaction={decorate}, CornflowerBlue] (-100) -- (000);
\draw[thick, postaction={decorate}, CornflowerBlue] (-110) -- (010);
\draw[thick, postaction={decorate}, CornflowerBlue] (-101) -- (001);
\draw[thick, postaction={decorate}, CornflowerBlue] (-111) -- (011);
\draw[thick, postaction={decorate}, PineGreen] (-100) -- (-101);
\draw[thick, postaction={decorate}, PineGreen] (-110) -- (-111);
\draw[thick, postaction={decorate}, WildStrawberry] (-101) -- (-111);
\draw[thick, postaction={decorate}, WildStrawberry] (-100) -- (-110);

\draw[thick, postaction={decorate}, YellowOrange] (0-10) -- (000);
\draw[thick, postaction={decorate}, YellowOrange] (1-10) -- (100);
\draw[thick, postaction={decorate}, YellowOrange] (0-11) -- (001);
\draw[thick, postaction={decorate}, YellowOrange] (1-11) -- (101);
\draw[thick, postaction={decorate}, RoyalBlue] (0-10) -- (1-10);
\draw[thick, postaction={decorate}, RoyalBlue] (0-11) -- (1-11);
\draw[thick, postaction={decorate}, PineGreen] (0-10) -- (0-11);
\draw[thick, postaction={decorate}, PineGreen] (1-10) -- (1-11);

\draw[thick, postaction={decorate}, YellowGreen] (00-1) -- (000);
\draw[thick, postaction={decorate}, YellowGreen] (10-1) -- (100);
\draw[thick, postaction={decorate}, YellowGreen] (01-1) -- (010);
\draw[thick, postaction={decorate}, YellowGreen] (11-1) -- (110);
\draw[thick, postaction={decorate}, RoyalBlue] (00-1) -- (10-1);
\draw[thick, postaction={decorate}, RoyalBlue] (01-1) -- (11-1);
\draw[thick, postaction={decorate}, WildStrawberry] (10-1) -- (11-1);
\draw[thick, postaction={decorate}, WildStrawberry] (00-1) -- (01-1);

\draw[thick, postaction={decorate}, YellowOrange] (-110) -- (-120);
\draw[thick, postaction={decorate}, YellowOrange] (-111) -- (-121);
\draw[thick, postaction={decorate}, PineGreen] (-120) -- (-121);

\draw[thick, postaction={decorate}, YellowOrange] (01-1) -- (02-1);
\draw[thick, postaction={decorate}, YellowOrange] (11-1) -- (12-1);
\draw[thick, postaction={decorate}, RoyalBlue] (02-1) -- (12-1);

\draw[thick, postaction={decorate}, CornflowerBlue] (-12-1) -- (02-1);
\draw[thick, postaction={decorate}, YellowGreen] (-12-1) -- (-120);

\draw[thick, postaction={decorate}, CornflowerBlue] (10-1) -- (20-1);
\draw[thick, postaction={decorate}, CornflowerBlue] (11-1) -- (21-1);
\draw[thick, postaction={decorate}, WildStrawberry] (20-1) -- (21-1);

\draw[thick, postaction={decorate}, CornflowerBlue] (1-10) -- (2-10);
\draw[thick, postaction={decorate}, CornflowerBlue] (1-11) -- (2-11);
\draw[thick, postaction={decorate}, PineGreen] (2-10) -- (2-11);

\draw[thick, postaction={decorate}, YellowOrange] (2-1-1) -- (20-1);
\draw[thick, postaction={decorate}, YellowGreen] (2-1-1) -- (2-10);

\draw[thick, postaction={decorate}, YellowGreen] (-101) -- (-102);
\draw[thick, postaction={decorate}, YellowGreen] (-111) -- (-112);
\draw[thick, postaction={decorate}, WildStrawberry] (-102) -- (-112);

\draw[thick, postaction={decorate}, YellowGreen] (0-11) -- (0-12);
\draw[thick, postaction={decorate}, YellowGreen] (1-11) -- (1-12);
\draw[thick, postaction={decorate}, RoyalBlue] (0-12) -- (1-12);

\draw[thick, postaction={decorate}, CornflowerBlue] (-1-12) -- (0-12);
\draw[thick, postaction={decorate}, YellowOrange] (-1-12) -- (-102);

\node[right] at (0,-2.1) { \scriptsize $X$};
\end{scope}

\begin{scope}[shift = {(10,-1.5)}, decoration={
    markings,
    mark=at position 0.5 with {\arrow{>}}}
    ]
    
\fill[gray!20] (1,1,1) -- (0,1,1) -- (-1,1,1) -- (-1,0,1) -- (0,0,1) -- (1,0,1) -- (1,1,1);
\fill[gray!20] (0,0,1) -- (1,0,1) -- (1,-1,1) -- (0,-1,1);
\fill[gray!20] (-1,1,1) -- (-1,1,2) -- (-1,0,2,) -- (-1,0,1);
\fill[gray!20] (0,-1,1) -- (0,-1,2) -- (1,-1,2) -- (1,-1,1);

\node[draw, circle, fill = black, inner sep=0pt] (001) at (0,0,1) {};
\node[draw, circle, fill = black, inner sep=0pt] (101) at (1,0,1) {};
\node[draw, circle, fill = black, inner sep=0pt] (011) at (0,1,1) {};
\node[draw, circle, fill = black, inner sep=1pt] (111) at (1,1,1) {};

\node[draw, circle, fill = black, inner sep=0pt] (-101) at (-1,0,1) {};
\node[draw, circle, fill = black, inner sep=0pt] (-111) at (-1,1,1) {};

\node[draw, circle, fill = black, inner sep=0pt] (0-11) at (0,-1,1) {};
\node[draw, circle, fill = black, inner sep=0pt] (1-11) at (1,-1,1) {};

\node[draw, circle, fill = black, inner sep=0pt] (-102) at (-1,0,2) {};
\node[draw, circle, fill = black, inner sep=0pt] (-112) at (-1,1,2) {};
\node[draw, circle, fill = black, inner sep=0pt] (0-12) at (0,-1,2) {};
\node[draw, circle, fill = black, inner sep=0pt] (1-12) at (1,-1,2) {};
\node[draw, circle, fill = black, inner sep=0pt] (-1-12) at (-1,-1,2) {};

\draw[thick, postaction={decorate}, RoyalBlue] (001) -- (101);
\draw[thick, postaction={decorate}, RoyalBlue] (011) -- (111);
\draw[thick, postaction={decorate}, WildStrawberry] (001) -- (011);
\draw[thick, postaction={decorate}, WildStrawberry] (101) -- (111);

\draw[thick, postaction={decorate}, CornflowerBlue] (-101) -- (001);
\draw[thick, postaction={decorate}, CornflowerBlue] (-111) -- (011);
\draw[thick, postaction={decorate}, WildStrawberry] (-101) -- (-111);

\draw[thick, postaction={decorate}, YellowOrange] (0-11) -- (001);
\draw[thick, postaction={decorate}, YellowOrange] (1-11) -- (101);
\draw[thick, postaction={decorate}, RoyalBlue] (0-11) -- (1-11);

\draw[thick, postaction={decorate}, YellowGreen] (-101) -- (-102);
\draw[thick, postaction={decorate}, YellowGreen] (-111) -- (-112);
\draw[thick, postaction={decorate}, WildStrawberry] (-102) -- (-112);

\draw[thick, postaction={decorate}, YellowGreen] (0-11) -- (0-12);
\draw[thick, postaction={decorate}, YellowGreen] (1-11) -- (1-12);
\draw[thick, postaction={decorate}, RoyalBlue] (0-12) -- (1-12);

\draw[thick, postaction={decorate}, CornflowerBlue] (-1-12) -- (0-12);
\draw[thick, postaction={decorate}, YellowOrange] (-1-12) -- (-102);

\node[right] at (-2.7,-0.7) { \scriptsize $X_{g_1}$};
\end{scope}

\begin{scope}[shift = {(9,2)}, decoration={
    markings,
    mark=at position 0.5 with {\arrow{>}}}
    ]
    
\fill[gray!40]  (-1,1,0) -- (-1,1,1) -- (-1,0,1) -- (0,0,1) -- (0,0,0) -- (0,1,0);
\fill[gray!40]  (1,-1,0) -- (1,-1,1) -- (0,-1,1) -- (0,0,1) -- (0,0,0) -- (1,0,0);
\fill[gray!40] (1,1,0) -- (0,1,0) -- (0,0,0) -- (1,0,0);
\fill[gray!20] (-1,1,0) -- (-1,2,0) -- (-1,2,1) -- (-1,1,1);
\fill[gray!20] (0,1,-1) -- (0,2,-1) -- (1,2,-1) -- (1,1,-1);
\fill[gray!20] (1,1,-1) -- (2,1,-1) -- (2,0,-1) -- (1,0,-1);
\fill[gray!20] (1,-1,1) -- (2,-1,1) -- (2,-1,0) -- (1,-1,0);

\node[draw, circle, fill = black, inner sep=0pt] (000) at (0,0,0) {};
\node[draw, circle, fill = black, inner sep=0pt] (100) at (1,0,0) {};
\node[draw, circle, fill = black, inner sep=0pt] (010) at (0,1,0) {};
\node[draw, circle, fill = black, inner sep=0pt] (110) at (1,1,0) {};
\node[draw, circle, fill = black, inner sep=0pt] (001) at (0,0,1) {};
\node[draw, circle, fill = black, inner sep=0pt] (101) at (1,0,1) {};
\node[draw, circle, fill = black, inner sep=0pt] (011) at (0,1,1) {};
\node[draw, circle, fill = black, inner sep=1pt] (111) at (1,1,1) {};

\node[draw, circle, fill = black, inner sep=0pt] (-100) at (-1,0,0) {};
\node[draw, circle, fill = black, inner sep=0pt] (-110) at (-1,1,0) {};
\node[draw, circle, fill = black, inner sep=0pt] (-101) at (-1,0,1) {};
\node[draw, circle, fill = black, inner sep=0pt] (-111) at (-1,1,1) {};

\node[draw, circle, fill = black, inner sep=0pt] (0-10) at (0,-1,0) {};
\node[draw, circle, fill = black, inner sep=0pt] (1-10) at (1,-1,0) {};
\node[draw, circle, fill = black, inner sep=0pt] (0-11) at (0,-1,1) {};
\node[draw, circle, fill = black, inner sep=0pt] (1-11) at (1,-1,1) {};

\node[draw, circle, fill = black, inner sep=0pt] (10-1) at (1,0,-1) {};
\node[draw, circle, fill = black, inner sep=0pt] (01-1) at (0,1,-1) {};
\node[draw, circle, fill = black, inner sep=0pt] (11-1) at (1,1,-1) {};

\node[draw, circle, fill = black, inner sep=0pt] (-120) at (-1,2,0) {};
\node[draw, circle, fill = black, inner sep=0pt] (-121) at (-1,2,1) {};
\node[draw, circle, fill = black, inner sep=0pt] (02-1) at (0,2,-1) {};
\node[draw, circle, fill = black, inner sep=0pt] (12-1) at (1,2,-1) {};
\node[draw, circle, fill = black, inner sep=0pt] (-12-1) at (-1,2,-1) {};

\node[draw, circle, fill = black, inner sep=0pt] (20-1) at (2,0,-1) {};
\node[draw, circle, fill = black, inner sep=0pt] (21-1) at (2,1,-1) {};
\node[draw, circle, fill = black, inner sep=0pt] (2-10) at (2,-1,0) {};
\node[draw, circle, fill = black, inner sep=0pt] (2-11) at (2,-1,1) {};
\node[draw, circle, fill = black, inner sep=0pt] (2-1-1) at (2,-1,-1) {};

\draw[thick, postaction={decorate}, RoyalBlue] (000) -- (100);
\draw[thick, postaction={decorate}, RoyalBlue] (010) -- (110);
\draw[thick, postaction={decorate}, RoyalBlue] (001) -- (101);
\draw[thick, postaction={decorate}, RoyalBlue] (011) -- (111);
\draw[thick, postaction={decorate}, WildStrawberry] (000) -- (010);
\draw[thick, postaction={decorate}, WildStrawberry] (100) -- (110);
\draw[thick, postaction={decorate}, WildStrawberry] (001) -- (011);
\draw[thick, postaction={decorate}, WildStrawberry] (101) -- (111);
\draw[thick, postaction={decorate}, PineGreen] (000) -- (001);
\draw[thick, postaction={decorate}, PineGreen] (100) -- (101);
\draw[thick, postaction={decorate}, PineGreen] (010) -- (011);
\draw[thick, postaction={decorate}, PineGreen] (110) -- (111);

\draw[thick, postaction={decorate}, CornflowerBlue] (-100) -- (000);
\draw[thick, postaction={decorate}, CornflowerBlue] (-110) -- (010);
\draw[thick, postaction={decorate}, CornflowerBlue] (-101) -- (001);
\draw[thick, postaction={decorate}, CornflowerBlue] (-111) -- (011);
\draw[thick, postaction={decorate}, PineGreen] (-100) -- (-101);
\draw[thick, postaction={decorate}, PineGreen] (-110) -- (-111);
\draw[thick, postaction={decorate}, WildStrawberry] (-101) -- (-111);
\draw[thick, postaction={decorate}, WildStrawberry] (-100) -- (-110);

\draw[thick, postaction={decorate}, YellowOrange] (0-10) -- (000);
\draw[thick, postaction={decorate}, YellowOrange] (1-10) -- (100);
\draw[thick, postaction={decorate}, YellowOrange] (0-11) -- (001);
\draw[thick, postaction={decorate}, YellowOrange] (1-11) -- (101);
\draw[thick, postaction={decorate}, RoyalBlue] (0-10) -- (1-10);
\draw[thick, postaction={decorate}, RoyalBlue] (0-11) -- (1-11);
\draw[thick, postaction={decorate}, PineGreen] (0-10) -- (0-11);
\draw[thick, postaction={decorate}, PineGreen] (1-10) -- (1-11);

\draw[thick, postaction={decorate}, RoyalBlue] (01-1) -- (11-1);
\draw[thick, postaction={decorate}, WildStrawberry] (10-1) -- (11-1);

\draw[thick, postaction={decorate}, YellowOrange] (-110) -- (-120);
\draw[thick, postaction={decorate}, YellowOrange] (-111) -- (-121);
\draw[thick, postaction={decorate}, PineGreen] (-120) -- (-121);

\draw[thick, postaction={decorate}, YellowOrange] (01-1) -- (02-1);
\draw[thick, postaction={decorate}, YellowOrange] (11-1) -- (12-1);
\draw[thick, postaction={decorate}, RoyalBlue] (02-1) -- (12-1);

\draw[thick, postaction={decorate}, CornflowerBlue] (-12-1) -- (02-1);
\draw[thick, postaction={decorate}, YellowGreen] (-12-1) -- (-120);

\draw[thick, postaction={decorate}, CornflowerBlue] (10-1) -- (20-1);
\draw[thick, postaction={decorate}, CornflowerBlue] (11-1) -- (21-1);
\draw[thick, postaction={decorate}, WildStrawberry] (20-1) -- (21-1);

\draw[thick, postaction={decorate}, CornflowerBlue] (1-10) -- (2-10);
\draw[thick, postaction={decorate}, CornflowerBlue] (1-11) -- (2-11);
\draw[thick, postaction={decorate}, PineGreen] (2-10) -- (2-11);

\draw[thick, postaction={decorate}, YellowOrange] (2-1-1) -- (20-1);
\draw[thick, postaction={decorate}, YellowGreen] (2-1-1) -- (2-10);

\node[right] at (-0.5,-1.75) { \scriptsize $X_{g_2g_3}$};

\end{scope}

\end{tikzpicture}
\caption{The Stallings-folded graph with the fundamental group $B_3(T_3)$ (obtained by folding the rose on three circles labelled by $g_1, g_2, g_3$) can be completed to a cube complex $X$ with the same fundamental group $B_3(T_3)$ that locally isometrically embeds in the Salvetti complex for $A(T_3)$. On the right, the subcomplexes $X_{g_1}, X_{g_2g_3}$ locally isometrically embed in $X$, and have $\pi_1{(X_{g_1})} = \langle g_1\rangle$ and $\pi_{(X_{g_2g_3})} = \langle g_2 g_3\rangle$.}
\label{fig: cubical completion of a graph}
\end{figure}
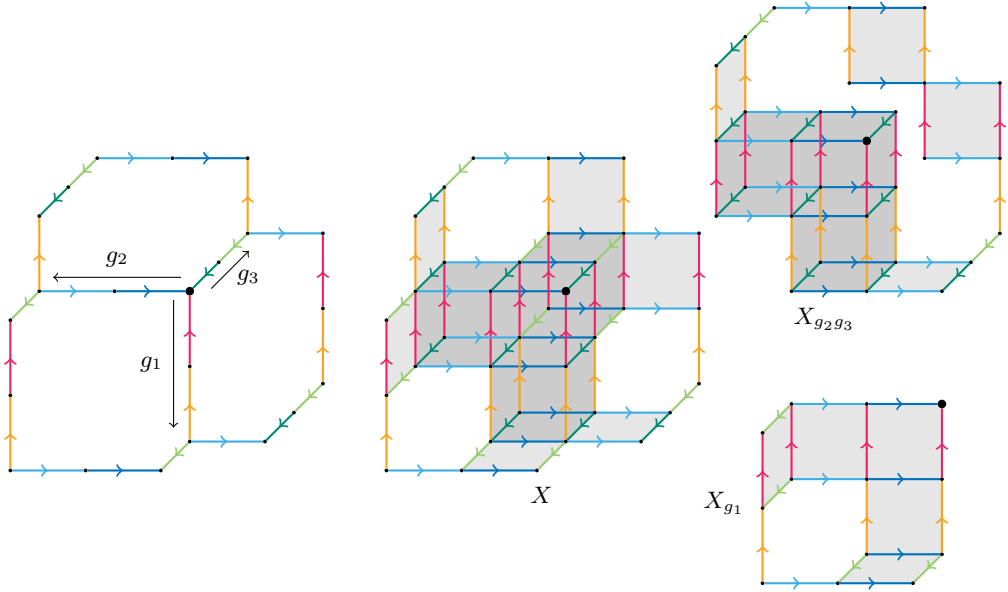
We now sketch an argument why $g_1$ and $g_2g_3$ have disjoint conjugates.
Let $X_{g_1}$ and $X_{g_2g_3}$ be the corresponding cube complexes that locally isometrically embed in the Salvetti complex of $A(T_3)$, illustrated in Figure~\ref{fig: cubical completion of a graph}. The reader can verify that the universal cover of $X_{g_1}$ is \emph{superconvex} in the universal cover of the Salvetti complex, in the sense of \cite[Defn 2.35]{wise}. Then the reader can check directly that the \emph{fiber product} $X_{g_1}\otimes_{X}X_{g_2g_3}$ (see \cite[Defn 8.8]{wise}) does not have any non-contractible components. By \cite[Lem 8.9]{wise} this implies that $\langle g_1\rangle$ and $\langle g_2g_3\rangle$ have disjoint conjugates.\end{example}

\begin{lemma}\label{lem:retraction}
Let $L$ be a simplicial graph, and let $L'$ be a full subgraph of $L$, i.e.\ if two vertices of $L$ are in $L'$, then the edge between them is in $L'$. 
Then $A_L$ retracts onto $A_{L'}$. 
\end{lemma}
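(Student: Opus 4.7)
The plan is to construct the retraction directly on generators and verify it is well-defined using the fullness hypothesis.

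First I would define a set-theoretic map $r$ from the generating set of $A_L$ (namely the vertex set $L^{(0)}$) to $A_{L'}$ by sending each vertex $v \in L'^{(0)}$ to itself (viewed as a generator of $A_{L'}$) and each vertex $v \in L^{(0)} \setminus L'^{(0)}$ to the identity. To see that this extends to a homomorphism $r: A_L \to A_{L'}$, I need to check that every defining relation of $A_L$ — that is, $[v,w] = 1$ for each edge $vw$ of $L$ — maps to a relation in $A_{L'}$. If either $v$ or $w$ lies outside $L'$, its image is trivial and the commutator relation is automatic. If both $v$ and $w$ lie in $L'$, then since $L'$ is a full subgraph of $L$, the edge $vw$ is also an edge of $L'$, so $[v,w] = 1$ holds in $A_{L'}$ as well. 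Hence $r$ descends to a well-defined homomorphism.

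Second, I would note that fullness of $L'$ also guarantees that the natural inclusion on generators extends to an (injective) homomorphism $\iota: A_{L'} \hookrightarrow A_L$, since every relation of $A_{L'}$ is also a relation of $A_L$. Composing, $r \circ \iota$ fixes every generator of $A_{L'}$, so $r \circ \iota = \id_{A_{L'}}$, and therefore $r$ is a retraction onto the image $\iota(A_{L'}) \cong A_{L'}$.

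I do not anticipate any genuine obstacle: the essential content is the observation that fullness of $L'$ is exactly what is needed to ensure both that generators of $A_{L'}$ commute in $A_L$ precisely when they commute in $A_{L'}$, and that killing the extra generators does not force any unintended relations among the surviving ones.
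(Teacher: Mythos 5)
Your proposal is correct and is essentially the paper's own proof: the paper simply defines the same map (identity on vertices of $L'$, trivial on the rest) and asserts it is a retraction, while you supply the routine verification that relations are preserved and that the composition with the inclusion is the identity.
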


\begin{proof}
The map $r:A_L \to A_{L'}$ given by $r(s) = s$ if $s$ is a vertex of $L'$ and $r(s) = 1$ if $s$ is a vertex of $L-L'$ is a retraction. 
\end{proof}

We are now ready to prove Theorem~\ref{thm:TC}. It will follow from Theorem~\ref{dc} where $H_0, H_1$ are appropriately chosen free abelian subgroups of the $m(\gG)$-fold product of free groups obtained in Theorem~\ref{t:main}.

\begin{T3}
Let $\gG$ be a graph with $m(\gG)>0$ vertices of degree $\ge 3$, and let $m_3(\gG)$ be the number of degree $3$ vertices. Then for all $n \ge 2m(\gG) + m_3(\gG)$ the topological complexity of $B_n(\gG)$ is $2m(\gG)$.
\end{T3}

\begin{proof}
Let $n \ge 2m(\gG) + m_3(\gG)$, and take a sufficiently fine subdivision of $\gG$ so that $\Cn(\gG)$ is locally CAT(0) cubical with $\pi_1(\Cn(\gG)) = B_n(\gG)$. 

As in the proof of Theorem \ref{t:main}, we pick the basepoint in $\Cn(\gG)$ is two points around every essential vertex of degree $\ge 4$, three points around every vertex of degree $3$, and the rest of the (irrelevant) points somewhere else.
Also, as in the proof of Theorem \ref{t:main}, let $A_\gD$ be the Crisp-Wiest RAAG with $B_n(\gG) \rightarrow A_{\gD}$, and $A_\times = \prod A(v)$ be a special subgroup, where each $A(v)$ is generated by the the edges $E_v$.
Again, as in the proof of Theorem \ref{t:main}, let $F(v)$ be the free group of $B_n(\gG)$ contained in $A(v)$ and can be naturally identified with $B_2(T_4)$ or $B_3(T_n)$, depending on the degree of the vertex.

By Lemma~\ref{lem:nilpotent} there exist a choice of two cyclic subgroups $C_0(v), C_1(v)$ of $F(v)$ that are not conjugate in $A(v)$. Let $H_i = \langle C_i(v): v \text{ is essential}\rangle$ for $i = 0,1$. Note that each $H_i$ is a free abelian subgroup of $B_n(\gG)$ of rank $m(\gG)$. 
We claim that $H_0$ and $H_1$ have disjoint conjugates in $A_\Delta$. 
By Lemma~\ref{lem:retraction} $A_{\Delta}$ retracts onto $A_\times$. By Lemma \ref{l:obvious} \eqref{e:retract} it suffices to show that $H_0, H_1$ have disjoint conjugates in $A_\times$. 
By assumption, $H_0, H_1$ are generated by cyclic subgroups with disjoint conjugates in each $A(v)$. Since $A_\times$ is a product of $A(v)$ for each essential vertex $v$,  $H_0$ and $H_1$ have disjoint conjugates in $A_\times$ by Lemma \ref{l:obvious}\eqref{e:product}.
By Lemma \ref{l:obvious} \eqref{e:subgp}, this implies that $H_0, H_1$ have disjoint conjugates in $B_n(\gG)$.

By Theorem~\ref{dc}, we conclude that $TC(B_n(\gG))\geq \cd(H_0\times H_1) = 2m(\gG)$.
\end{proof}

\section{Action dimension and $\ell^2$-homology}\label{sec:action dim}

The \emph{action dimension} of a torsion-free group $G$ is the minimal dimension of a manifold model for $BG$, see \cite{bkk} for a more general definition which works for all finitely generated groups. In a similar vein as topological complexity, the action dimension of $G$ is bounded above by twice the geometric dimension of $G$.  

The first nontrivial computation of action dimension was done by Bestvina, Kapovich, and Kleiner; they showed that the action dimension of an $m$-fold product of free groups was as large as possible.
\begin{theorem}[Bestvina-Kapovich-Kleiner \cite{bkk}]\label{t:bkk}
The action dimension of $F_2^m$ is $2m$. 
\end{theorem}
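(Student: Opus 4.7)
The plan is to prove the upper and lower bounds separately. For the upper bound $\actdim(F_2^m)\le 2m$, note that $F_2$ is the fundamental group of the once-punctured torus $\Sigma$, an aspherical open $2$-manifold with contractible universal cover $\R^2$. The $m$-fold product $\Sigma^m$ is therefore an aspherical $2m$-manifold with fundamental group $F_2^m$, which witnesses the upper bound.

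For the lower bound $\actdim(F_2^m)\ge 2m$, I would use the obstructor-dimension machinery of \cite{bkk}. The key combinatorial input is the $m$-fold join $K_m = T\ast T\ast\cdots \ast T$, where $T$ is the discrete set of three points, so $\dim K_m = m-1$. An induction on $m$, using the product formula for van Kampen obstructions under join, shows that the equivariant van Kampen obstruction of $K_m$ in dimension $2(m-1)$ is nontrivial; the base case $m=2$ is the classical non-planarity of $K_{3,3}=T\ast T$. Next, one realizes $K_m$ as a conical obstructor inside $F_2^m$: since $F_2$ has infinitely many ends, three proper rays in a Cayley graph converging to three distinct ends give a proper embedding $T\times[0,\infty)\hookrightarrow F_2$, and taking such choices across the $m$ factors of $F_2^m$ assembles into a proper embedding of the open cone on $K_m$ into $F_2^m$.

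The main theorem of \cite{bkk} then says that if a group $G$ admits such a conical obstructor modelled on an $n$-dimensional complex with nontrivial van Kampen obstruction in dimension $2n$, then $\actdim(G)\ge 2n+2$. Applied with $n=m-1$, this yields $\actdim(F_2^m)\ge 2m$. The most delicate step — which I would cite as a black box — is this last implication: given a hypothetical aspherical manifold model $M^d$ for $G$ with $d<2n+2$, an engulfing/coning argument at infinity forces $K_m$ to embed into the sphere $S^{d-1}$ at infinity of $\widetilde M$, contradicting the nontriviality of its van Kampen class. This obstructor theorem is the heart of the argument and the main obstacle; in contrast, the combinatorial input about $K_m$ and its embedding into $F_2^m$ is straightforward.
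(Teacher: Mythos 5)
The paper does not prove this statement at all; it is imported verbatim as a black-box citation to Bestvina--Kapovich--Kleiner, so there is no internal proof to compare against. Your sketch is a faithful outline of the argument in the cited source: the upper bound via the $m$-fold product of once-punctured tori, and the lower bound via the obstructor-dimension theorem applied to the $(2m-2)$-obstructor $T\ast\cdots\ast T$ (join of $m$ triples of points), whose cone embeds properly in $F_2^m$ using three rays to distinct ends in each free factor. This is correct and is essentially the same route as the original reference, with the genuinely hard step (the obstructor theorem itself) appropriately flagged as the part you are taking on faith.
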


They also showed that the action dimension of the $n$-strand (usual) braid group is $2n+1$, which motivates the following question. 
\begin{question}
Given a graph $\gG$, what is the action dimension of $B_n(\gG)$?
\end{question}

Since action dimension is monotonic with respect to subgroups, we obtain the following corollary of Theorem \ref{t:main} and Theorem \ref{t:bkk}.
\begin{corollary}\label{thm:action dimension}
The action dimension of $B_{n}(\gG)$ is equal to $2m(\gG)$ when $n\geq 2m(\gG) + m_3(\gG)$. 
\end{corollary}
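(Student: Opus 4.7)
The plan is to combine two standard inequalities that pin $\actdim(B_n(\gG))$ between $2m(\gG)$ and $2m(\gG)$, as the hint immediately preceding the corollary already suggests: the lower bound comes from subgroup monotonicity applied to the $F_2^{m(\gG)}$ produced by Theorem~\ref{t:main}, while the upper bound comes from the geometric dimension of $B_n(\gG)$.

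For the lower bound, I invoke monotonicity of action dimension: if $H$ is a torsion-free subgroup of a torsion-free group $G$, then $\actdim(H)\leq \actdim(G)$, since the universal cover of any manifold model for $BG$ covers a manifold model for $BH$ of the same dimension. By Theorem~\ref{t:main}, $B_n(\gG)$ contains a copy of $F_2^{m(\gG)}$ whenever $n\geq 2m(\gG)+m_3(\gG)$, and by Theorem~\ref{t:bkk} the action dimension of $F_2^{m(\gG)}$ equals $2m(\gG)$. Monotonicity therefore gives $\actdim(B_n(\gG))\geq 2m(\gG)$.

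For the upper bound, I use that $B_n(\gG)$ has geometric dimension $m(\gG)$. By the theorem of \'Swi\k{a}tkowski quoted in the introduction, $\Conf_n(\gG)$ is homotopy equivalent to a CAT(0) cube complex of dimension $\min\{m(\gG),n\}=m(\gG)$, which serves as a finite $K(B_n(\gG),1)$. Combined with the standard inequality $\actdim(G)\leq 2\gd(G)$ for groups of finite geometric dimension, this yields $\actdim(B_n(\gG))\leq 2m(\gG)$. Together with the lower bound, this establishes equality.

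The only delicate point is justifying the upper bound $\actdim\leq 2\gd$. For our setting this is not a genuine obstacle: the local isometry $\Cn(\gG)\to S(A_\gD)$ from the Crisp--Wiest procedure realizes $B_n(\gG)$ as a subgroup of a right-angled Artin group, and the cubical $K(B_n(\gG),1)$ of dimension $m(\gG)$ admits a manifold thickening of dimension $2m(\gG)$ by standard regular-neighborhood arguments (alternatively, one can cite the general upper bound $\actdim \leq 2\,\gd$ whenever a finite CW classifying space of dimension $\geq 3$ is present, which applies once $m(\gG)\geq 3$, with the small cases $m(\gG)\in\{1,2\}$ handled by hand using products of surfaces).
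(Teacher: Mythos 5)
Your proof is correct and follows essentially the same route as the paper: the lower bound via monotonicity of action dimension applied to the $F_2^{m(\gG)}$ subgroup from Theorem~\ref{t:main} together with Theorem~\ref{t:bkk}, and the upper bound via $\actdim \leq 2\gd$ using \'Swi\k{a}tkowski's $m(\gG)$-dimensional cube complex model. Your extra care in justifying the upper bound (thickening the cubical classifying space, and the low-dimensional caveat) goes beyond what the paper writes out, but does not change the argument.
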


We still think that computing the action dimension for lower $n$ is an interesting problem; in particular it would be nice to know how this relates to the planarity of the graph. 

Action dimension is conjecturally related to the $L^2$-homology groups of $G$.

\begin{actdimconjecture}
Suppose that $G$ is a group with $L^2H_i(G) \ne 0$. Then the action dimension of $G$ is $\ge 2i$. 
\end{actdimconjecture}

It follows from Corollary~\ref{thm:action dimension} that $L^2H_{m(\gG)}(B_n(\gG)) \ne 0$ as long as $n \ge 2m(\gG) + m_3(\gG)$. Indeed, it is known that $L^2H_m(F_2^m) \ne 0$; and if $H < G$, then $L^2H_{\cd(G)}(H) \ne 0$ implies that $L^2H_{\cd(G)}(G) \ne 0$.

\begin{bibdiv}
\begin{biblist}

 \bib{abrams}{book} {

	 AUTHOR = {Abrams, Aaron David},
	 TITLE = {Configuration spaces and braid groups of graphs},
      NOTE = {Thesis (Ph.D.)--University of California, Berkeley},
 PUBLISHER = {ProQuest LLC, Ann Arbor, MI},
      YEAR = {2000},
     PAGES = {67},
     	}
	
	 \bib{abramsghrist}{article} {

	AUTHOR = {Abrams, Aaron},
	 Author = {Ghrist, Robert},
     TITLE = {Finding topology in a factory: configuration spaces},
   JOURNAL = {Amer. Math. Monthly},
  FJOURNAL = {American Mathematical Monthly},
    VOLUME = {109},
      YEAR = {2002},
    NUMBER = {2},
     PAGES = {140--150},
      ISSN = {0002-9890,1930-0972},
   MRCLASS = {68T40 (52C99)},
  MRNUMBER = {1903151},
MRREVIEWER = {Claudia\ Landi},
       DOI = {10.2307/2695326},
       URL = {https://doi.org/10.2307/2695326},
}

\bib{AnMaciazek}{article}{
    AUTHOR = {An, Byung Hee}
    Author =  {Maciazek, Tomasz},
     TITLE = {Geometric presentations of braid groups for particles on a
              graph},
   JOURNAL = {Comm. Math. Phys.},
  FJOURNAL = {Communications in Mathematical Physics},
    VOLUME = {384},
      YEAR = {2021},
    NUMBER = {2},
     PAGES = {1109--1140},
      ISSN = {0010-3616},
   MRCLASS = {20F36 (05E16 16T25 20C35 81R05 81V27)},
  MRNUMBER = {4259383},
MRREVIEWER = {Pierre A. Lochak},
       DOI = {10.1007/s00220-021-04095-x},
       URL = {https://doi-org.oca.ucsc.edu/10.1007/s00220-021-04095-x},
}	
	
      

 \bib{bestvina}{article} {
	author = {Bestvina, Mladen},

	title = {PL Morse Theory},
	journal = {Mathematical Communications},

	volume = {13}, YEAR = {2008},
	pages = {149-162},
	}

 \bib{bkk}{article} {
	author = {Bestvina, Mladen},
	author = {Kapovich, Michael},
	author = {Kleiner, Bruce},
	title = {Van {K}ampen's embedding obstruction for discrete groups},
	journal = {Invent. Math.},

	volume = {150}, YEAR = {2002},
	number = {2},
	pages = {219--235},
	issn = {0020-9910},

	url = {http://dx.doi.org.proxy.lib.ohio-state.edu/10.1007/s00222-002-0246-7}, }
	
 \bib{cv17}{article} {
	author = {Cohen, Daniel},
	author = {Vandembroucq, Lucile},
	title = {Topological complexity of the Klein bottle},
	journal = {Journal of Applied and Computational Topology},

	volume = {1}, YEAR = {2017},
	number = {2},
	pages = {199–213},
 }

\bib{ConnollyDoig}{article}{
    AUTHOR = {Connolly, Francis}
    Author = {Doig, Margaret},
     TITLE = {On braid groups and right-angled {A}rtin groups},
   JOURNAL = {Geom. Dedicata},
  FJOURNAL = {Geometriae Dedicata},
    VOLUME = {172},
      YEAR = {2014},
     PAGES = {179--190},
      ISSN = {0046-5755},
   MRCLASS = {55R80 (20F36 57M05)},
  MRNUMBER = {3253777},
MRREVIEWER = {Jie Wu},
       DOI = {10.1007/s10711-013-9914-6},
       URL = {https://doi-org.oca.ucsc.edu/10.1007/s10711-013-9914-6},
}

\bib{crispparis}{article}{
    AUTHOR = {Crisp, John},
    Author = {Paris, Luis},
     TITLE = {The solution to a conjecture of {T}its on the subgroup
              generated by the squares of the generators of an {A}rtin
              group},
   JOURNAL = {Invent. Math.},
  FJOURNAL = {Inventiones Mathematicae},
    VOLUME = {145},
      YEAR = {2001},
    NUMBER = {1},
     PAGES = {19--36},
      ISSN = {0020-9910},
   MRCLASS = {20F36 (20F65)},
  MRNUMBER = {1839284},
MRREVIEWER = {Stephen P. Humphries},
       DOI = {10.1007/s002220100138},
       URL = {https://doi.org/10.1007/s002220100138},
}
	
	 \bib{cw}{article} {
	author = {Crisp, John},
	author = {Wiest, Bert},
	title = {
Embeddings of graph braid and surface groups in right-angled Artin groups and braid groups},
	journal = {Algebraic and Geometric Topology},

	volume = {150}, YEAR = {2004},
	number = {4},
	pages = {439--472},
	}

		 \bib{dranishnikov}{article} {
	author = {Dranishnikov, Alexander},

	title = {On topological complexity of hyperbolic groups},
	journal = {Proceedings of the AMS},

	volume = {148}, YEAR = {2020},
	number = {10},
	pages = {4547--4556},
	}

	
	 \bib{farber}{article} {
	author = {Farber, Michael},
	title = {Topological complexity of motion planning},
	journal = {Discrete Comput. Geom.},

	volume = {29}, YEAR = {2003},
	pages = {211--221},

	}
	
\bib{ghrist}{incollection}{
    AUTHOR = {Ghrist, Robert},
     TITLE = {Configuration spaces and braid groups on graphs in robotics},
 BOOKTITLE = {Knots, braids, and mapping class groups---papers dedicated to
              {J}oan {S}. {B}irman ({N}ew {Y}ork, 1998)},
    SERIES = {AMS/IP Stud. Adv. Math.},
    VOLUME = {24},
     PAGES = {29--40},
 PUBLISHER = {Amer. Math. Soc., Providence, RI},
      YEAR = {2001},
   MRCLASS = {55R80 (20F36 93C85)},
  MRNUMBER = {1873106},
MRREVIEWER = {Vagn Lundsgaard Hansen},
       DOI = {10.1090/amsip/024/03},
       URL = {https://doi-org.oca.ucsc.edu/10.1090/amsip/024/03},
}

	 \bib{glo}{article} {
	author = {Grant, Mark},
	author = {Lupton, Gregory},
	author = {Oprea, John},
	title = {New lower bounds for the topological complexity of aspherical spaces},
	journal = {Topology and its applications},

	volume = {150}, YEAR = {2015},
	pages = {78--91},

	}

\bib{jankiewiczschreve}{article}{
      title={Right-angled {A}rtin subgroups of {A}rtin groups}, 
      author={Jankiewicz, Kasia},
      author ={Schreve, Kevin},
      VOLUME = {106},
      YEAR = {2022},
    NUMBER = {2},
     PAGES = {818--854},
      JOURNAL = {J. Lond. Math. Soc.},

       DOI = {10.1112/jlms.12586},
}

 \bib{k22}{article} {
	author = {Knudsen, Ben},
	title = {The topological complexity of pure graph braid groups is stably maximal},
	journal = {Forum Math. Sigma},
	volume = {10}, 
	YEAR = {2022},
	pages = {78--91},
}

 \bib{k23}{article} {
	author = {Knudsen, Ben},
	title = {On the stabilization of the topological complexity of graph braid groups},
	year = {2023},
	note = {arXiv:2302.04346},
}

 \bib{k23a}{article} {
	author = {Knudsen, Ben},
	title = {Farber's conjecture and beyond},
	year = {2024},
	note = {arXiv:2402.03022},
}

\bib{l22}{article}{
    AUTHOR = {Li, Kevin},
     TITLE = {On the topological complexity of toral relatively hyperbolic
              groups},
   JOURNAL = {Proc. Amer. Math. Soc.},
  FJOURNAL = {Proceedings of the American Mathematical Society},
    VOLUME = {150},
      YEAR = {2022},
    NUMBER = {3},
     PAGES = {967--974},
      ISSN = {0002-9939,1088-6826},
   MRCLASS = {20F67 (55M30 55R35)},
  MRNUMBER = {4375696},
MRREVIEWER = {Dirk\ Sch\"{u}tz},
       DOI = {10.1090/proc/15733},
       URL = {https://doi.org/10.1090/proc/15733},
}

\bib{Sageev14}{incollection}{
    AUTHOR = {Sageev, Michah},
     TITLE = {{$\rm CAT(0)$} cube complexes and groups},
 BOOKTITLE = {Geometric group theory},
    SERIES = {IAS/Park City Math. Ser.},
    VOLUME = {21},
     PAGES = {7--54},
 PUBLISHER = {Amer. Math. Soc., Providence, RI},
      YEAR = {2014},
   MRCLASS = {20F65},
  MRNUMBER = {3329724},
MRREVIEWER = {Mark F. Hagen},
}

 \bib{PrueScrimshaw}{article} {
	AUTHOR = {Prue, Paul},
	author = { Scrimshaw, Travis},
     TITLE = {Abrams's stable equivalence for graph braid groups},
   JOURNAL = {Topology Appl.},
  FJOURNAL = {Topology and its Applications},
    VOLUME = {178},
      YEAR = {2014},
     PAGES = {136--145},
}

 \bib{stallings}{unpublished}{
	author = {Stallings, J.R.},
	title = {Embedding homotopy types into manifolds},
	date = {1965}, note={unpublished},
	url = {http://math.berkeley.edu/~stall/embkloz.pdf} }

 \bib{scheirer}{article} {
	author = {Scheirer, Steven},
	title = {Topological complexity of unordered configuration spaces of certain graphs},
	journal = {Topology and its Applications},
	volume = {285}, 
	YEAR = {2020},
	pages = {107382},
}
 \bib{swiatkowski}{article} {
	
	AUTHOR = {\'{S}wi\k{a}tkowski, Jacek},
     TITLE = {Estimates for homological dimension of configuration spaces of
              graphs},
   JOURNAL = {Colloq. Math.},
  FJOURNAL = {Colloquium Mathematicum},
    VOLUME = {89},
      YEAR = {2001},
    NUMBER = {1},
     PAGES = {69--79},}

\bib{wise}{book}{
    AUTHOR = {Wise, Daniel T.},
     TITLE = {The structure of groups with a quasiconvex hierarchy},
    SERIES = {Annals of Mathematics Studies},
    VOLUME = {209},
 PUBLISHER = {Princeton University Press, Princeton, NJ},
      YEAR = {2021},
     PAGES = {x+357},
      ISBN = {[9780691170442]; [9780691170459]; [9780691213507]},
   MRCLASS = {20F65 (20F67)},
  MRNUMBER = {4298722},
MRREVIEWER = {Anthony Genevois},
}

\end{biblist}
\end{bibdiv}

\end{document}